\documentclass[11pt]{article}

\usepackage[a4paper,margin=2.5cm]{geometry}

\usepackage{lipsum}
\usepackage{amsfonts}
\usepackage{graphicx}
\usepackage{epstopdf}
\usepackage{algorithm}
\usepackage{algorithmic}
\usepackage{amsmath,amssymb,amsfonts,amsthm}
\usepackage{amsopn}
\usepackage{bbm}
\usepackage{mathrsfs}
\usepackage{hyperref}
\usepackage[nameinlink,noabbrev]{cleveref}
\usepackage{xcolor}
\usepackage[normalem]{ulem}
\usepackage{cancel}
\usepackage{comment}

\usepackage{todonotes}

\theoremstyle{plain}
\newtheorem{theorem}{Theorem}[section]
\newtheorem{lemma}[theorem]{Lemma}
\newtheorem{proposition}[theorem]{Proposition}
\newtheorem{corollary}[theorem]{Corollary}
\theoremstyle{definition}
\newtheorem{definition}[theorem]{Definition}
\newtheorem{assumptions}[theorem]{Assumptions}
\theoremstyle{remark}
\newtheorem{remark}[theorem]{Remark}

\numberwithin{equation}{section}

\newenvironment{keywords}{\par\smallskip\noindent\textbf{Keywords. }}{\par\smallskip}
\newenvironment{MSCcodes}{\par\smallskip\noindent\textbf{MSC Classification. }}{\par\smallskip}
\newcommand{\email}[1]{\texttt{#1}}

\title{Stochastic Optimal Control of Hawkes Jump-Diffusion Systems}
\author{Daria Sakhanda\thanks{ETH Z\"urich, Department of Mathematics, Switzerland (\email{daria.sakhanda@math.ethz.ch}).}
	\and Joshué Helí Ricalde-Guerrero\thanks{ETH Z\"urich, Department of Mathematics, Switzerland (\email{joshue.ricalde@math.ethz.ch}).}}

\def\n{\noindent}

\makeatletter

\providecolor{lyxadded}{rgb}{0,0,1}
\providecolor{lyxdeleted}{rgb}{1,0,0}

\begin{document}
	\date{}
	\maketitle
	
\begin{abstract}
    This paper is devoted to developing a framework for stochastic growth models with environmental risk, in which rare but catastrophic shocks interact with capital accumulation and pollution. Building on the Poisson point process formulation studied in \cite{sakhanda2025infinitehorizon}, we extend the model to disasters driven by a marked Hawkes process, allowing past disasters to temporarily increase the likelihood of subsequent shocks. Our work focuses on a subcritical Markovian Hawkes specification, in which the state space is augmented by the self-excitation component of disaster risk. We establish the well-posedness and nonexplosion of the resulting controlled Hawkes dynamics. Using the Hamilton-Jacobi-Bellman characterization of the corresponding Poisson control problem, we obtain quantitative estimates for the Hawkes excitation process and prove, under a small-excitation scaling, that the Hawkes value function converges to its Poisson counterpart as the magnitude of self-excitation vanishes. This provides a rigorous Poisson approximation of the stochastic control problem and quantifies the effect of self-excitation on optimal growth under environmental disaster risk.
\end{abstract}

\begin{keywords}\sloppy
Stochastic optimal control, Hamilton-Jacobi-Bellman equations, Hawkes processes, marked point processes, Poisson random measures, jump-diffusion processes, integro-differential equations, growth-environment models, pollution-dependent intensity, rare disasters.
\end{keywords}

\begin{MSCcodes}
	93E20, 60G55, 91-10
\end{MSCcodes}

\section{Introduction}

\n 

The study of economic growth under environmental risk raises fundamental  mathematical questions in stochastic control for systems combining continuous  dynamics and discontinuous jumps. Classical growth-environment models typically describe pollution and damages as deterministic or smoothly evolving, with dynamics  governed by stochastic differential equations without discontinuities. 
Empirical evidence, however, shows that risk also takes the form of rare but catastrophic events -- such as abrupt climate disasters, ecosystem collapses, or large-scale technological failures -- which arrive unpredictably and are naturally modeled as jumps with large economic impacts. While Poisson-based models provide a natural framework for rare catastrophic events, they do not account for temporal clustering: the occurrence of one disaster may increase the likelihood of subsequent events, generating periods of persistently elevated risk.

Motivated by economic questions of growth, environmental change, and climate risk, this paper extends our previous Poisson framework \cite{sakhanda2025infinitehorizon} by introducing self-exciting disaster arrivals through a marked Hawkes process. The disaster intensity combines a pollution-dependent baseline component with an excitation component generated by past disasters. Each event increases the excitation level according to its mark, while excitation gradually decays between successive events. The resulting framework therefore allows disaster risk to depend simultaneously on the current environmental state and on the history of previous disasters.

From the mathematical perspective, the Hawkes specification introduces an additional state variable and an endogenous jump intensity, leading to a stochastic control problem on an augmented state space. We establish the global strong well-posedness and non-explosion of the controlled marked Hawkes system, as well as a rigorous connection between the Hawkes and Poisson formulations. We consider a family of Hawkes models in which the excitation generated by individual disasters becomes progressively smaller. Under suitable moment and stability conditions, we derive uniform discounted estimates for the excitation process and show that the Hawkes value function converges to the corresponding Poisson value function in the small-excitation limit. Moreover, we obtain an explicit first-order bound for the approximation error.

Thus, the main contributions of the paper are threefold:
\begin{enumerate}
	\item We introduce a controlled marked Hawkes specification for pollution-dependent and self-exciting catastrophic risk in a growth-environment model.
	\item We establish the mathematical foundations of the resulting stochastic control problem, including global strong well-posedness and non-explosion.
	\item We establish a quantitative connection between the Hawkes and Poisson formulations by proving convergence of their value functions in the small-excitation regime, together with an explicit rate of approximation.
\end{enumerate}

The rest of the paper is organized as follows. In Section~2, we introduce the modelling framework and describe how the state-dependent Poisson disaster mechanism studied in \cite{sakhanda2025infinitehorizon} is extended to a marked Hawkes specification with self-excitation. Section~3 collects the basic concepts and notation for marked Hawkes processes and introduces the subcriticality condition used throughout the analysis. In Section~4, we formulate the controlled Hawkes model, specify the admissible controls and standing assumptions, and state our main result. Section~5 is devoted to the Hawkes-Poisson approximation. We first establish the well-posedness and nonexplosion of the controlled Hawkes dynamics and then study the asymptotic behaviour of the excitation process and its implications for the disaster mechanism. Finally, we introduce a small-excitation scaling, parameterized by $\eta>0$, where $\eta$ controls the magnitude of Hawkes self-excitation, and prove that the Hawkes value function converges to its Poisson counterpart as $\eta\downarrow0$, with an explicit $O(\eta)$ approximation bound.

\section{Modelling framework}

\n 

We consider a stylized representation of the global economy, which produces a single composite good under constant returns to scale. Production relies on the aggregate capital stock at time $t$, denoted by $K_t$ which includes physical capital, human capital, and intangible assets. The production process $(K_t)_{t \geq 0}$ generates pollution: at each instant $t$, greenhouse gas (GHG) emissions $E_t$ are released into the atmosphere. These emissions accumulate in the atmospheric pollution stock, $P_t$, which increases with the flow of emissions and decreases through a natural absorption rate $\alpha \in [0,1)$, assumed to be small or negligible. While $P_t$ is referred to as the pollution stock, it can more generally be interpreted as the \textit{inverse of environmental quality}. Similarly, the emissions variable $E_t$, can be viewed more broadly as any environmentally damaging by-product of economic activity. 

In this paper, we extend the disaster arrival mechanism from state-dependent Poisson processes, as studied in \cite{sakhanda2025infinitehorizon}, to a Hawkes process with self-excitation. This specification captures the empirically observed clustering of extreme events, whereby the occurrence of a disaster increases the likelihood of subsequent disasters in the short run, before this excess risk gradually decays over time (e.g., aftershocks, persistent weather regimes, or cascading failures). 

Following the notation from \cite{sakhanda2025infinitehorizon}, the controlled dynamics of the model are
\begin{align}
	dK_t &= b^{\mathrm{cap}}(K_t,P_t,C_t,\theta_t)\,dt - \int_{\mathcal Z} \bigl(1-\omega(K_{t-},P_{t-},\zeta)\bigr)K_{t-}\, N(dt,d\zeta), 
    \label{eq:marked_hawkes_K} 
    \\
	dP_t &= b^{\mathrm{pol}}(K_t,P_t,C_t,\theta_t)\,dt + \sigma P_t\,dW_t, 
    \label{eq:marked_hawkes_P} 
    \\
	dY_t &= -bY_t\,dt + \int_{\mathcal Z}a(\zeta)\,N(dt,d\zeta). \label{eq:marked_hawkes_Y}
\end{align}
Here, $Y=(Y_t)_{t\geq0}$ denotes the self-exciting component of the disaster intensity, while $N$ is the marked integer-valued random measure associated with disaster arrivals. The process $W=(W_t)_{t\geq0}$ is a Brownian motion, independent of the underlying Poisson random measure used in the construction of $N$, and $\sigma>0$ determines the magnitude of the diffusion component in the pollution dynamics. Moreover, $b>0$ is the decay rate of self-excitation, whereas $a(\zeta)\geq0$ measures the increase in excitation generated by a disaster with mark $\zeta$. The processes $Y$ and $N$ will be specified more precisely below.

In comparison, the marked PRM-driven model from Section~2.4 in \cite{sakhanda2025infinitehorizon}, follows the same dynamics on the capital and pollution \eqref{eq:marked_hawkes_K} and \eqref{eq:marked_hawkes_P}, respectively, with no additional self-exciting component \eqref{eq:marked_hawkes_Y}. 
The Hawkes specification can therefore be viewed as an extension of the Poisson model in which past disasters affect the likelihood of subsequent shocks through the excitation process $Y$. Under the subcriticality condition, we establish the well-posedness and nonexplosion of the Hawkes-driven model. We then introduce the small-excitation scaling
$$
a_\eta(\zeta):=\eta a(\zeta), \qquad \eta>0,
$$
and show that, as $\eta\downarrow0$, the corresponding Hawkes control problem converges to its Poisson counterpart.

The predictable disaster intensity is
\begin{equation} \label{eq:lambda_Hawkes}
    \lambda_t^H = \mu(P_{t-}^H)+Y_{t-}.
\end{equation}
Thus, $\mu(P_{t-}^H)$ represents the pollution-dependent baseline intensity, while $Y_{t-}$ captures the additional disaster risk generated by past events. At each disaster time with mark $\zeta$, the excitation process increases by $a(\zeta)$ and subsequently decays at rate $b$. Consequently, past disasters affect future disaster arrivals through $Y$, generating the characteristic self-exciting behaviour of the Hawkes process.

In comparison, the marked Poisson model of \cite{sakhanda2025infinitehorizon} has disaster intensity
\begin{equation} \label{eq:lambda_Poisson}
\lambda_t^P=\mu(P_{t-}^P),
\end{equation}
and therefore contains no self-exciting component. The Hawkes specification can thus be viewed as an extension of the Poisson benchmark in which past disasters generate an additional endogenous component of disaster risk. A central objective of the present paper is to quantify the effect of this component and to establish conditions under which the corresponding Hawkes control problem can be approximated by its Poisson counterpart.

\section{Preliminaries}

\subsection{Basic concepts and notation}

\n 

Throughout the paper, we work on a filtered probability space
$(\Omega,\mathcal F,\mathbb F,\mathbb P)$, where
$\mathbb F=(\mathcal F_t)_{t\geq0}$ satisfies the usual conditions.
For a c\`adl\`ag process $X=(X_t)_{t\geq0}$, we denote by
$$
X_{t-}:=\lim_{s\uparrow t}X_s
$$
its left limit at time $t>0$, and by
$$
\Delta X_t:=X_t-X_{t-}
$$
its jump at time $t$.

Let $(\mathcal Z,\mathfrak Z)$ be a measurable mark space, endowed with a probability measure $\nu$; see \cite{daley_introduction_2008}. The variable $\zeta\in\mathcal Z$ will represent the mark associated with a disaster, describing its magnitude or type. Integration with respect to the mark distribution will be written as
$$
\int_{\mathcal Z} f(\zeta)\,\nu(d\zeta)
$$
whenever the integral is well defined.

For a measurable space $(E,\mathcal E)$, we denote by $\mathcal B(E)$ its Borel $\sigma$-algebra whenever $E$ is endowed with a topology. The indicator function of a measurable set $A$ is denoted by $\mathbf 1_A$.

The state spaces of the Poisson and Hawkes models are denoted, respectively, by
$$
\mathcal S_P:=\mathbb R_{>0}\times\mathbb R_{>0},
\qquad
\mathcal S_H:=\mathcal S_P\times\mathbb R_{\geq0}.
$$
Accordingly, the Poisson state is represented by $(K,P)$, whereas the Hawkes state is augmented by the excitation component $Y$ and is represented by $(K,P,Y)$.

The following subsection recalls the point-process notation and the basic properties of marked Hawkes processes required throughout the paper.

\subsection{Preliminaries on Hawkes processes}

\n 

Let $\{(\tau_n,\Delta_n)\}_{n\geq1}$ be a sequence of random variables
taking values in $\mathbb R_{\geq0}\times\mathcal Z$, where
$$
0<\tau_1<\tau_2<\cdots
$$
denote the event times. We refer to $\tau_n$ as the \emph{arrival time of the $n$-th event} (disaster) and to $\Delta_n$ as
the \emph{mark of the $n$-th event} (magnitude or type of disaster).
Following \cite{daley_introduction_2008}, let $\mathcal{ N }_{ \mathbb{R} \times \mathcal{Z} }^\#$ denote space of integer-valued measures over $\mathbb{R} \times \mathcal{Z}$ and define $N$ as the measurable mapping
\begin{align*}
    N : (\Omega, \mathcal{F}) 
        &\longrightarrow 
        \big( 
            \mathcal{ N }_{ \mathbb{R} \times \mathcal{Z} }^\#, \mathcal{B}(\mathcal{ N }_{ \mathbb{R} \times \mathcal{Z} }^\#)
        \big)
    \\
    \omega
        &\longmapsto
        \sum_{n \geq 1} \delta_{(\tau_n(\omega), \Delta_n(\omega))} (dt, d\zeta).
\end{align*}
We refer to both the sequence of events $\{(\tau_n, \Delta_n)\}$ and the (random) counting measure $N$ as \emph{marked point process} interchangeably.

Integrals with respect to the point process are defined (realization-by-realization) as 
\begin{align*}
    \int_{{\mathbb{R}} \times \mathcal{Z}} f(t,\zeta) N(\omega;dt \times d\zeta) = \sum_{n \geq 1} f(\tau_n(\omega), \Delta_n(\omega))
\end{align*}
for any measurable function $f$ with $\operatorname{dom}f \subseteq \mathbb{R}_{\geq 0} \times \mathcal{Z}$. In particular, the \emph{ground counting process} \cite{daley_introduction_2008} associated with $N$ is defined as
\begin{align}
    \label{Eq:Ground-process}
    &N_t := \int_{{\mathbb{R}} \times \mathcal{Z}} 1_{(0,t] \times \mathcal{Z}}(t,\zeta) \, N(dt \times d\zeta) = N((0,t] \times \mathcal{Z}),
    &
    & \forall t \geq 0,
\end{align}
so that $N_t$ records the total number of disasters occurring up to time $t$.
Let $\mathbb F=(\mathcal F_t)_{t\geq0}$ be filtration satisfying the usual conditions, such that $\mathbb{F}$ contains the filtration generated by $\{N((0,s] \times Z);\, s \leq t, Z \in \mathfrak{Z}\}_{t \geq 0}$. From hereonout, we say the point process $\{(\tau_n, \Delta_n)\}$ -- equivalently, $N$ -- is a \emph{marked Hawkes process} if there exists an $\mathbb{F}$-predictable \textit{stochastic intensity kernel} $\Lambda$ satisfying the following self-exciting jump SDE:
\begin{equation}
    \label{eq:Marked_Hawkes_Y-1}
    \begin{aligned}
        \Lambda(\omega; t, d\zeta) 
            &=
            \lambda(\omega; t,\zeta) \,\nu(d \zeta),
        \\ \vphantom{\int}
        \lambda(\omega; t,\zeta)
            &=
            \mu(\omega; t) + 
            \alpha(Y_t(\omega),\zeta)
        \\ 
        Y_t(\omega) 
            &= 
            -b \int_{(0,t]} Y_s(\omega) ds + 
            \int_{(-\infty,t] \times \mathcal{Z}} a(\omega; t-s,\zeta) N(\omega; \operatorname{d} s \times \operatorname{d} \zeta ),
    \end{aligned}
\end{equation}
where $\nu$ is some given probability measure over $(\mathcal{Z}, \mathfrak{Z})$, and $\mu$ is some given $\mathbb{F}$-predictable process with values in $\mathbb{R}_{\geq 0}$. 
Thus, each disaster increases the future arrival intensity through the excitation process $Y$, while this additional excitation decays exponentially between successive events. The strength of this self-exciting mechanism must be sufficiently small relative to its rate of decay in order to ensure stability of the Hawkes process. This leads to the subcriticality condition introduced next.

\subsection{Subcriticality condition}

\n 

A fundamental stability condition for Hawkes processes is the
\emph{subcriticality condition}. In the cluster representation of a linear Hawkes process, this condition requires the expected number of offspring generated by a single event to be strictly smaller than one; see \cite{hawkes_oakes_1974} and \cite{bremaud_stability_1996}.

\begin{definition}[Subcritical marked Hawkes process]
	\label{def:subcritical_hawkes}
	Let
	$$
	h:\mathbb R_{>0}\times\mathcal Z\longrightarrow\mathbb R_{\geq0}
	$$
	be a measurable excitation kernel satisfying
	$$
	\int_0^\infty\int_{\mathcal Z}
	h(s,\zeta)\,\nu(d\zeta)\,ds<\infty.
	$$
	The quantity
	\begin{equation}
		\label{eq:hawkes_branching_ratio}
		m_h
		:=
		\int_0^\infty\int_{\mathcal Z}
		h(s,\zeta)\,\nu(d\zeta)\,ds
	\end{equation}
	is called the \emph{branching ratio} and represents the expected number
	of direct offspring generated by a single event. The marked Hawkes
	process is said to be \emph{subcritical} if
	\begin{equation}
		\label{eq:general_hawkes_subcriticality}
		m_h<1.
	\end{equation}
	See \cite{hawkes_oakes_1974} for the cluster representation of linear
	Hawkes processes.
\end{definition}

\section{Controlled Hawkes model and main results}

\n 

We start by defining the set of admissible controls and specifying the standing assumptions. 
The \emph{set of admissible actions} corresponds to a given 
subset
\begin{align*}
    \mathfrak{a} \subseteq \mathbb{R}_{\geq 0} \times [0,1].
\end{align*}

\begin{definition}[Admissible controls]
For every $(k,p,y) \in \mathcal{S}_H$, the set of admissible controls is defined as
\begin{align*}
    \mathcal{A}{(k,p,y)}
    :=
    \Big\{
        \alpha:\Omega \times \mathbb{R}_{\geq 0} \longrightarrow \mathfrak{a} 
        \,\Big|\,
        &(\alpha_t)_{t\geq 0} \text{ is }\mathbb{F}-\text{predictable,}
        \\ \vphantom{\int}
        &(K_0,P_0, Y_0) = (k,p,y) \in \mathcal{F}_0,
        \\
        &\text{and }\mathbb{E} \Big[ \int_{0}^\infty \|\alpha_t\|^2 d t \Big] < \infty
    \Big\}.
\end{align*}
\end{definition}

In the context of the problem, the marginals of an admissible control correspond to the consumption and abatement processes $(C_t)_{t \geq 0}$ and $(\theta_t)_{t \geq 0}$, respectively. 

\begin{assumptions}
\label{Assumptions}
\hfill
\noindent
\begin{enumerate}
    \item 
        \textbf{On the stochastic basis:}
        The filtered probability space $(\Omega, \mathcal{F}, \mathbb{F}, \mathbb{P})$ admits an $\mathbb{F}$-Brownian motion $W$, and an independent $\mathbb{F}$-Poisson random measure $M$ on $\mathbb{R}_{\geq 0} \times \mathcal{Z} \times \mathbb{R}_{\geq 0}$ with intensity measure $dt \otimes \nu(d \zeta) \otimes dr$ for some $\sigma$-finite measure $\nu$ with finite second moment. In addition, $\mathbb{F} = \{ \mathcal{F}_t \}_{t \geq 0}$ is (the complete, right-continuous augmentation of) the filtration
        \begin{align}
    \mathcal{F}_t
    :=
    \sigma\Big\{
        M\bigl((0,s]\times C\times D\bigr)
        ~\big|~
        0\leq s\leq t,\;
        C\in\mathfrak{Z},\;
        D\in\mathcal{B}(\mathbb{R}_{\geq0})
    \Big\}
    \vee
    \mathcal{F}_t^W,
    \qquad t\geq0.
\end{align}

    \item
        \textbf{On the coefficients:}
        \begin{enumerate}
        
        	\item
        		$\mu:\mathbb R_{>0}\to\mathbb R_{\geq0}$ is locally Lipschitz and there exists $L_\mu>0$ such that
        		\begin{equation}
                    \label{eq:mu_growth_assumpt}
                    \mu(p)\leq L_\mu(1+p), \qquad p>0.
        	    \end{equation}
        		
        	\item
        		$a:\mathcal Z\to\mathbb R_{\geq0}$ is measurable and
        		\begin{align}
                    \int_{\mathcal Z}a(\zeta)\,\nu(d\zeta)<\infty,
                    \qquad
                    \int_{\mathcal Z}a(\zeta)^2\,\nu(d\zeta)<\infty.
                    \label{eq:int_a_a^2_finite}
                \end{align}

        	\item
        		For every admissible control $(C,\theta)$, the functions $b^{\mathrm{cap}}$ and $b^{\mathrm{pol}}$ are locally Lipschitz in $(k,p)$, uniformly with respect to the control on bounded sets, and satisfy a linear-growth condition
        		\begin{align}
                    |b^{\mathrm{cap}}(k,p,c,\theta)| + |b^{\mathrm{pol}}(k,p,c,\theta)| \leq L(1+k+p+c)
                \end{align}
        		for some $L>0$.
        				
            \item
        		The survival map
        		\begin{align}
                    \omega:
            		\mathbb R_{>0}\times\mathbb R_{>0}\times\mathcal Z
            		\longrightarrow(0,1]
            	\end{align}
            	is measurable, locally Lipschitz in $(k,p)$ uniformly in $\zeta$, and satisfies
            	\begin{align}
                    0<\omega(k,p,\zeta)\leq1.
                    \label{eq:w_at_disaster}
            	\end{align}
        		
        	\item
                Starting from $K_0>0$ and $P_0>0$, the continuous dynamics do not attain the boundary of $\mathbb R_{>0}^2$ in finite time; that is, $ K_t>0$, $P_t>0$ for every finite $t$ for which the continuous dynamics are defined.
        \end{enumerate}

    \item 
        \textbf{Subcriticality:}
Let
$$
\bar a_1:=\int_{\mathcal Z}a(\zeta)\,\nu(d\zeta).
$$
The excitation parameters satisfy
\begin{equation}
	\label{eq:hawkes_subcriticality_assumption}
	\bar a_1<b.
\end{equation}
For the exponential excitation kernel considered below, condition~\eqref{eq:hawkes_subcriticality_assumption} is precisely the specialization of the general subcriticality condition introduced in Definition~\ref{def:subcritical_hawkes}.
\end{enumerate}
\end{assumptions}
With the previous considerations in mind, the augmented system
$X^H=(X_t^H)_{t\geq0}$ is defined as the $\mathcal S_H$-valued,
$\mathbb F$-adapted process
\begin{align*}
    X_t^H=(K_t^H,P_t^H,Y_t),
    \qquad
    \forall t\geq0,
\end{align*}
characterized by the controlled dynamics:
\begin{align}
	&d K^H_t =b^{\mathrm{cap}}(K^H_t,P^H_t,\alpha_t)\, dt -
	\int_\mathcal{Z} \bigl(1-\omega(K^H_{t-},P^H_{t-}, \zeta)\bigr)K^H_{t-} \, N^H( d t \times d \zeta ),
	\label{eq:Marked_Hawkes_K}
	\\ \vphantom{\int}
	&dP^H_t = b^{\mathrm{pol}}(K^H_t,P^H_t,\alpha_t)\,dt + \sigma P^H_t d W_t,
	\label{eq:Marked_Hawkes_P}
    \\ \label{Eq:Marked_Hawkes_Initial_Conditions} \vphantom{\int}
    &(K^H_0, P^H_0, Y_0) = (k, p, y) \in \mathcal{S}_H.
\end{align}
Here $N^H$ is a \emph{marked Hawkes process} with ground process $N = (N_t)$, as defined in \eqref{Eq:Ground-process} with
\begin{align}
    \label{eq:Marked_Hawkes_Y-2}
    &\mu(\omega; t) = \mu(P^H_t(\omega))
    &
    &\text{and}
    &
    &a(\omega; t-s,\zeta) = a(\zeta),
\end{align}
where $\mu:\mathbb R_{>0}\to\mathbb R_{\geq0}$ and
$a:\mathcal Z\to\mathbb R_{\geq0}$ are the deterministic measurable functions specified in Assumptions~\ref{Assumptions}.

We now state our main result:
\begin{theorem}
\label{thm:hawkes_poisson_value_approximation}
Under Assumptions \ref{Assumptions}, the following hold:
\begin{enumerate}
    \item 
\textbf{Well-posedness of the Hawkes-driven model:}
Let $(k,p,y)\in\mathcal S_H$ be given. For any admissible control
$\alpha\in\mathcal A(k,p,y)$, there exists a unique
$\mathbb F$-adapted strong solution
$$
X^H=(K^H,P^H,Y)
$$
to \eqref{eq:Marked_Hawkes_K}--\eqref{Eq:Marked_Hawkes_Initial_Conditions},
where
$$
dY_t = -bY_t\,dt + \int_{\mathcal Z}a(\zeta)\,N^H(dt,d\zeta), \qquad t>0.
$$
Here $N^H$ denotes the integer-valued random measure associated with the marked Hawkes process, defined by the predictable thinning
$$
N^H(dt,d\zeta) := \int_{\mathbb R_{\geq0}} \mathbf 1_{[0,\mu(P_{t-}^H)+Y_{t-}]}(r) \,M(dt,d\zeta,dr),
$$
where $M$ is the underlying Poisson random measure on
$\mathbb R_{\geq0}\times\mathcal Z\times\mathbb R_{\geq0}$.
The predictable compensator of $N^H$ is
$$
\Lambda^H(dt,d\zeta) = \bigl(\mu(P_{t-}^H)+Y_{t-}\bigr) \nu(d\zeta)\,dt.
$$
       
       \item \textbf{Well-posedness of the Poisson-driven model
(see Section~4 in \cite{sakhanda2025infinitehorizon}):}
Let $(k,p)\in\mathcal S_P$ be given. For any admissible control
$\alpha\in\mathcal A_P(k,p)$, there exists a unique
$\mathbb F$-adapted process
$$
X^P=(K_t^P,P_t^P)_{t\geq0}
$$
solving
\begin{align}
	dK_t^P &= b^{\mathrm{cap}}(K_t^P,P_t^P,\alpha_t)\,dt -
	\int_{\mathcal Z}\bigl(1-\omega(K_{t-}^P,P_{t-}^P,\zeta)\bigr)
	K_{t-}^P\,N^P(dt,d\zeta), \label{eq:Poisson_comparison_K}
	\\
	dP_t^P &= b^{\mathrm{pol}}(K_t^P,P_t^P,\alpha_t)\,dt + \sigma P_t^P\,dW_t,
	\label{eq:Poisson_comparison_P}
	\\
	(K_0^P,P_0^P) &= (k,p)\in\mathcal S_P.
	\label{eq:Poisson_comparison_Initial_conditions}
\end{align}
Here $N^P$ denotes the integer-valued random measure associated with the Poisson disaster process, defined by the predictable thinning
$$
N^P(dt,d\zeta) := \int_{\mathbb R_{\geq0}} \mathbf 1_{\{r\leq\mu(P_{t-}^P)\}} \,M(dt,d\zeta,dr),
$$
where $M$ is the underlying Poisson random measure on
$\mathbb R_{\geq0}\times\mathcal Z\times\mathbb R_{\geq0}$. The predictable compensator of $N^P$ is
$$
\Lambda^P(dt,d\zeta) = \mu(P_{t-}^P)\nu(d\zeta)\,dt.
$$

    \item \textbf{Approximation property:}
Let $V_H^\eta$ denote the value function of the Hawkes model with
scaled excitation
$$
a_\eta(\zeta):=\eta a(\zeta).
$$
Then, for all sufficiently small $\eta>0$,
\begin{equation}
	\left|
	V_H^\eta(k,p,y_\eta)-V_P(k,p) \right| \leq \frac{\psi^{-\varepsilon}}{1-\varepsilon}
	A_*\eta,
\end{equation}
where $A_*>0$ is independent of $\eta$. Consequently,
$$
V_H^\eta(k,p,y_\eta)\longrightarrow V_P(k,p) \qquad\text{as }\eta\downarrow0.
$$
\end{enumerate}
\end{theorem}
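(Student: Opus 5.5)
We argue the three parts separately; part~2 is quoted from Section~4 of \cite{sakhanda2025infinitehorizon}, so the work lies in parts~1 and~3.

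\textbf{Part 1: well-posedness.} We use an interlacing construction driven by the Poisson random measure $M$ with the thinning $\mathbf 1_{[0,\mu(P^H_{t-})+Y_{t-}]}(r)$. Between accepted atoms of $M$ the system is a diffusion in $(K,P)$ coupled with the linear ODE $\dot Y=-bY$. Local Lipschitz continuity, linear growth and the boundary assumption (2e) give a unique strong solution on $\mathcal S_H$ up to the next jump. At an accepted atom $(t,\zeta,r)$ we update
\[
K_t=\omega(K_{t-},P_{t-},\zeta)K_{t-}\in(0,K_{t-}], \qquad Y_t=Y_{t-}+a(\zeta).
\]
We then restart, which yields a solution up to $\tau_\infty=\lim_n\tau_n$. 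Pathwise uniqueness follows from localization with stopping times $T_R$ (exit from a ball) together with the fact that the thinning indicator is determined by the left limits.

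The key step is non-explosion, $\tau_\infty=\infty$ a.s. Since $P$ does not depend on the jumps, linear growth and Gronwall give $\mathbb E\sup_{s\le t}P_s<\infty$. Taking expectations in the $Y$ equation with the compensator $\Lambda^H$ gives
\[
\frac{d}{dt}\mathbb E Y_t\le -(b-\bar a_1)\mathbb E Y_t+\bar a_1 L_\mu(1+\mathbb E P_t).
\]
The bound is made rigorous by stopping at $\tau_n\wedge T_R$ and using monotone convergence. Subcriticality then yields $\sup_{t\le T}\mathbb E Y_{t\wedge\tau_n}<\infty$ uniformly in $n$, and hence
\[
\mathbb E N_{T\wedge\tau_n}=\mathbb E\int_0^{T\wedge\tau_n}\bigl(\mu(P_s)+Y_s\bigr)\,ds
\]
is bounded uniformly in $n$. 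It follows that $\tau_\infty>T$ a.s. for every $T$. The predictable compensator of $N^H$ comes directly from the compensator $dt\,\nu(d\zeta)\,dr$ of $M$ integrated over $r$.

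\textbf{Part 3: approximation.} We couple both models on the same $(W,M)$ and under the same control $\alpha$, with $y_\eta=\eta y$. For the scaled model $Y^\eta=\eta\tilde Y$, where $\tilde Y$ solves the unscaled excitation equation driven by $N^{H,\eta}$. The discounted bounds from Part~1, with $b-\eta\bar a_1\ge b/2$ for $\eta$ small, give
\[
\mathbb E\int_0^\infty e^{-\rho t}Y^\eta_t\,dt\le C\eta(1+y+p),
\]
uniformly over admissible controls. Before the first time $\tau^\eta$ at which $M$ has an atom in the strip $r\in(\mu(P_{t-}),\mu(P_{t-})+Y^\eta_{t-}]$, the two coupled trajectories coincide. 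This holds because $P^H=P^P$ always (the $P$ dynamics contain no jumps) and the $K$-jumps agree until the first discrepant atom. Hence
\[
\mathbb P(\tau^\eta\le t)\le\mathbb E\int_0^t Y^\eta_s\,ds .
\]
For CRRA-type utility $u(c)=c^{1-\varepsilon}/(1-\varepsilon)$ with the normalization $\psi$ of \cite{sakhanda2025infinitehorizon}, we split the payoff difference at $\tau^\eta$. The part after $\tau^\eta$ is bounded by the discounted integral of $Y^\eta$ times uniform moment bounds on the consumption paths; we use H\"older and the second moment of $a$ to control $\mathbb E[e^{-\rho\tau^\eta}\,\cdot\,]$. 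This produces
\[
|J_H^\eta(\alpha)-J_P(\alpha)|\le\frac{\psi^{-\varepsilon}}{1-\varepsilon}A_*\eta
\]
uniformly in $\alpha$. Taking suprema in both directions then gives the value bound, since both problems share the same control set after identifying $\mathcal A_P(k,p)$ with controls adapted to the common filtration.

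\textbf{Main obstacle.} We expect the hardest step to be the uniformity over controls. The bound on the post-discrepancy payoff requires discounted moment estimates for the value processes and consumption that do not depend on $\alpha$. Our first attempt will use the HJB characterization of $V_P$ from \cite{sakhanda2025infinitehorizon}. Applying It\^o's formula to $e^{-\rho t}V_P(K^H_t,P^H_t)$ under the Hawkes dynamics, the extra generator term is
\[
Y_{t-}\int_{\mathcal Z}\bigl[V_P(\omega k,p)-V_P(k,p)\bigr]\,\nu(d\zeta),
\]
which is bounded by $C\,Y_{t-}\,k^{1-\varepsilon}$ thanks to the homogeneity of $V_P$. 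This gives one inequality, $V_H^\eta\le V_P+O(\eta)$. The reverse inequality follows by plugging the Poisson-optimal feedback into the Hawkes system.
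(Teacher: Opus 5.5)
Your Part~1 is sound, and the interlacing construction is a legitimate alternative to the paper's truncation-and-localization argument. Combined with $n\,\mathbb P(\tau_n\le T)\le\mathbb E N_{T\wedge\tau_n}\le B_T$, it handles accumulation of jumps and the lower boundary cleanly. One slip: $P$ does depend on the jumps, because $b^{\mathrm{pol}}$ depends on $K$. The moment bound still holds, since disaster jumps only decrease $K$, so $(K,P)$ satisfies a jump-free Gronwall/BDG estimate.

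The same slip affects your coupling in Part~3. The coupled paths agree up to $\tau^\eta$, but afterwards in general $P^H\neq P^P$ as well as $K^H\neq K^P$.

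The real gap is that the coupling does not give the rate $\eta$. You do have $\mathbb E[e^{-\rho\tau^\eta}\mathbf 1_{\{\tau^\eta<\infty\}}]\le\mathbb E\int_0^\infty e^{-\rho t}Y^\eta_t\,dt=O(\eta)$. But if $R$ denotes the payoff difference after $\tau^\eta$, discounted back to $\tau^\eta$, H\"older only yields
\[
\bigl|\mathbb E\bigl[e^{-\rho\tau^\eta}\mathbf 1_{\{\tau^\eta<\infty\}}R\bigr]\bigr|\le\bigl(\mathbb E\bigl[e^{-\rho\tau^\eta}\mathbf 1_{\{\tau^\eta<\infty\}}\bigr]\bigr)^{1/p'}\|R\|_{p}=O\bigl(\eta^{1/p'}\bigr),\qquad p'>1,
\]
and even this needs $\|R\|_p$ bounded uniformly over controls. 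To reach order $\eta$ you would have to bound $\mathbb E[\,|R|\mid\mathcal F_{\tau^\eta}]$ by $g(X_{\tau^\eta-})$ for a deterministic growth function $g$. That bound must hold uniformly over all continuations of an arbitrary open-loop control. You would then use $\mathbb E[e^{-\rho\tau^\eta}g(X_{\tau^\eta-})\mathbf 1_{\{\tau^\eta<\infty\}}]\le\mathbb E\int_0^\infty e^{-\rho t}Y^\eta_t\,g(X_t)\,dt$, which is again a weighted estimate of the kind you have not proved.

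The fallback in your last paragraph is exactly the paper's route:
\begin{itemize}
\item extend $v=V_P$ trivially in $y$;
\item note that the generator discrepancy is $y\int_{\mathcal Z}\Delta_\zeta v\,d\nu$, with $\int_{\mathcal Z}|\Delta_\zeta v|\,d\nu\le\frac{\psi^{-\varepsilon}}{1-\varepsilon}k^{1-\varepsilon}$;
\item get the upper bound for arbitrary controls and the lower bound from the Poisson-optimal feedback.
\end{itemize}
What is missing is its decisive input. Since $Y^\eta$ and $K^{H,\eta}$ are correlated, your unweighted bound on $\mathbb E\int_0^\infty e^{-\rho t}Y^\eta_t\,dt$ does not control the residual. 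You need
\[
\sup_{\alpha}\mathbb E\int_0^\infty e^{-\rho t}Y^\eta_t\bigl(1+(K^{H,\eta}_t)^q+(P^{H,\eta}_t)^q\bigr)\,dt\le A_*\eta,\qquad q=1-\varepsilon .
\]
The paper proves this in two steps. First it applies It\^o's formula to $e^{-\rho t}(Y^\eta_t)^2$, compensates, and absorbs the cross terms with Young's inequality using the margin $2b+\rho-2\eta_0\bar a_1>0$. This gives $\mathbb E\int_0^\infty e^{-\rho t}(Y^\eta_t)^2\,dt\le A_Y\eta^2$, and it is where $\bar a_2<\infty$, $y_\eta\le c_y\eta$ and a uniform discounted bound on $\mu(P)^2$ enter. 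It then applies Cauchy--Schwarz against uniform discounted $2q$-moments of $(K,P)$.

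None of these uniform-in-control discounted bounds follows from Part~1, and neither does your claimed $C\eta(1+y+p)$: the constants there depend on $T$ and on $\mathbb E\int_0^T C_t^2\,dt$. These bounds must be imposed as additional hypotheses, as the paper does in the Section~5 version of the theorem. The same holds for utility integrability, transversality, and uniform integrability of $e^{-\rho\tau}v(K_\tau,P_\tau)$ over bounded stopping times, which are needed to remove the localization.
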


\section{Hawkes-Poisson approximation}

\n 

This section is devoted to the proof of Theorem~\ref{thm:hawkes_poisson_value_approximation}. The main objective is to quantify the effect of Hawkes self-excitation on the associated infinite-horizon control problem and to show that, when the excitation becomes sufficiently small, the Hawkes value function approaches its Poisson counterpart.

The argument is developed in several stages. We first establish the well-posedness and nonexplosion of the controlled Hawkes dynamics. We then study the asymptotic behaviour of the excitation process and show that, under the subcriticality condition, the contribution of self-excitation vanishes asymptotically. This provides a first probabilistic connection between the Hawkes and Poisson models.
Finally, we introduce a small-excitation scaling and derive estimates that allow us to transfer this approximation to the corresponding infinite-horizon value functions.

Throughout this section, we focus on the arguments that are specific to the Hawkes specification. The corresponding Poisson control problem and its well-posedness properties were studied in \cite{sakhanda2025infinitehorizon}.

\subsection{Well-posedness of the Hawkes model}

\n 

We begin by establishing that the controlled state dynamics are well defined. This is necessary before considering either the asymptotic behaviour of the Hawkes excitation or the associated infinite-horizon control problem. The main difficulty relative to the Poisson model is that the jump intensity is itself stochastic and depends on the endogenous excitation process $Y$.

The following proposition establishes existence and pathwise uniqueness of the controlled state process and, importantly, excludes the accumulation of disaster times on finite intervals. The proof relies on a Poisson embedding of the marked Hawkes process, followed by localization and uniform finite-horizon moment estimates.

\begin{proposition}[Existence, pathwise uniqueness and nonexplosion]
\label{prop:wellposed_marked_hawkes}
Let $T>0$ be fixed, and let $(C_t,\theta_t)_{t \geq 0}$ be an arbitrary admissible control. Then, on every finite horizon $[0,T]$, there exists a pathwise unique strong solution $(K_t,P_t,Y_t)_{0\leq t\leq T}$ to the SDE 
\begin{align}
    \label{eq:WP_K}
    dK_t &= b^{\mathrm{cap}}(K_t,P_t,C_t,\theta_t)\,dt - \int_{\mathcal Z} \bigl(1-\omega(K_{t-},P_{t-},\zeta)\bigr)K_{t-} \,N(dt,d\zeta),
	\\ \label{eq:WP_P} \vphantom{\int}
	dP_t &= b^{\mathrm{pol}}(K_t,P_t,C_t,\theta_t)\,dt + \sigma P_t\,dW_t, 
    \\ \label{eq:WP_Y}
	dY_t &= -bY_t\,dt + \int_{\mathcal Z} a(\zeta)\,N(dt,d\zeta),
\end{align}
with initial condition $(K_0,P_0,Y_0)=(k,p,y)\in\mathcal S_H$. Moreover, for every $T>0$,
\begin{align*}
    N((0,T]\times\mathcal Z)<\infty
	\qquad\text{a.s.},
\end{align*}
Hence, the marked point process is locally finite and nonexplosive, and the global solution is locally square-integrable in time.
\end{proposition}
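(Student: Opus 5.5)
We construct the solution interlacing-style between jump times, using the Poisson embedding $N(dt,d\zeta)=\int_{\mathbb R_{\geq0}}\mathbf 1_{[0,\mu(P_{t-})+Y_{t-}]}(r)\,M(dt,d\zeta,dr)$ from Theorem~\ref{thm:hawkes_poisson_value_approximation}. Fix a level $n\in\mathbb N$ and introduce the truncated intensity $\lambda^{(n)}_t:=\min\{\mu(P_{t-})+Y_{t-},n\}$. Only atoms of $M$ in the strip $\mathbb R_{\geq0}\times\mathcal Z\times[0,n]$ matter. Since $\nu$ is a probability measure in the Hawkes construction, these atoms form a Poisson process of rate $n$. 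They are therefore a.s.\ finite on $[0,T]$ and can be ordered as $\sigma_1<\sigma_2<\cdots$.

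\textbf{Construction on each inter-arrival interval.} On $[\sigma_j,\sigma_{j+1})$ the system is driven only by $W$. There we solve the SDE \eqref{eq:WP_P} together with the ODE for $K$ (whose drift depends on $P$ and the control). The coefficients $b^{\mathrm{cap}},b^{\mathrm{pol}}$ are locally Lipschitz with linear growth in $(k,p)$. Standard arguments (localization by exit times of compact sets of $\mathbb R_{>0}^2$, plus Gronwall for $\mathbb E\sup_{t\le T}(K_t+P_t)^2$) then give a unique global strong solution that does not reach the boundary, by Assumption~2(e). Here the term $c$ in the growth bound is controlled through $\mathbb E\int\|\alpha_t\|^2dt<\infty$. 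Between atoms, $Y$ is the explicit solution $Y_t=Y_{\sigma_j}e^{-b(t-\sigma_j)}$. At an atom $(\sigma_{j+1},\zeta,r)$ we accept it iff $r\le \mu(P_{\sigma_{j+1}-})+Y_{\sigma_{j+1}-}$; the condition involves only left limits, so the decision is predictable. If accepted, we set $K\mapsto\omega(K_-,P_-,\zeta)K_-$, which remains in $(0,K_-]$ by \eqref{eq:w_at_disaster}, and $Y\mapsto Y_-+a(\zeta)$. Pathwise uniqueness follows by induction over the atoms, since each step is uniquely determined. This yields a unique solution $X^{(n)}$ up to the stopping time $\rho_n:=\inf\{t:\mu(P_t)+Y_t\geq n\}$. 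The solutions are consistent in $n$: $X^{(n)}=X^{(m)}$ on $[0,\rho_n\wedge\rho_m)$.

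\textbf{Nonexplosion, the main step.} It remains to show $\rho_n\uparrow\infty$ a.s., which also gives $N((0,T]\times\mathcal Z)<\infty$. We first bound the accepted jumps by a moment estimate. Stopped at $\rho_n\wedge T$, the process $\int a\,dN-\int_0^\cdot(\mu(P_s)+Y_s)\bar a_1\,ds$ is a martingale. Taking expectations in \eqref{eq:WP_Y} therefore gives
\begin{equation*}
\mathbb E[Y_{t\wedge\rho_n}]\le y+\mathbb E\int_0^{t\wedge\rho_n}\bigl((\bar a_1-b)Y_s+\bar a_1\mu(P_s)\bigr)\,ds .
\end{equation*}
Combining $\mu(p)\le L_\mu(1+p)$ with the $n$-independent bound $\mathbb E\sup_{t\le T}P_t\le C_T$ (from the $P$-equation, which has no jumps and linear growth), Gronwall yields $\sup_n\mathbb E[Y_{T\wedge\rho_n}]\le C_T'$. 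Subcriticality \eqref{eq:hawkes_subcriticality_assumption} is not even needed for this finite-horizon bound. Since $Y$ only decreases between jumps, $\sup_{t\le T\wedge\rho_n}Y_t\le y+\int_0^{T\wedge\rho_n}\!\int a\,dN$. The expectation of the right-hand side is bounded by $y+\bar a_1\,\mathbb E\int_0^{T\wedge\rho_n}(\mu(P_s)+Y_s)ds$, and this is uniformly bounded in $n$ by the previous estimate. The same bound controls $\mathbb E\,N((0,T\wedge\rho_n]\times\mathcal Z)$. Combined with $\mathbb E\sup_{t\le T}P_t\le C_T$, Markov's inequality gives $\mathbb P(\rho_n\le T)\le \mathbb P\bigl(\sup_{t\le T\wedge\rho_n}(\mu(P_t)+Y_t)\ge n\bigr)\le C/n\to0$. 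Hence $\rho_n\to\infty$ a.s. Fatou's lemma then yields $\mathbb E N((0,T]\times\mathcal Z)<\infty$, so $N((0,T]\times\mathcal Z)<\infty$ a.s. The subtle point is to ensure the $P$-moment bound is genuinely independent of $n$. It is, because $P$ does not see the jumps and $K$ enters $b^{\mathrm{pol}}$ only with linear growth while $K$ is nonincreasing at jumps. Local square integrability follows from the same Gronwall estimates applied to second moments, using $\int a^2\,d\nu<\infty$ for the quadratic variation of the jump part of $Y$.
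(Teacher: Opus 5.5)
Your proof is correct, but it takes a genuinely different route from the paper's.

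The paper truncates all coefficients to globally Lipschitz extensions on the compacts $D_n\subset\mathcal S_H$ and caps the intensity at $n$. It then invokes the general strong well-posedness theory for SDEs driven by $W$ and a Poisson random measure, justified through the integrated estimate \eqref{eq:indicator_lipschitz}. After localizing by exit times from $D_n$, it has to rule out three failure modes separately: accumulation of jumps; escape to $+\infty$, via second-moment bounds from It\^o's formula for $Y^2$, $K^2$, $P^2$ with BDG and Gronwall; and attainment of the lower boundary, via Assumption~2(e).

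You instead build the solution pathwise by interlacing over the finitely many atoms of $M$ in the strip $\{r\le n\}$. The only SDE you solve is then the continuous $(K,P)$ system between atoms, where linear growth and Assumption~2(e) settle explosion and boundary issues once. The only remaining localization is the intensity level $\rho_n$. For that, a first-moment bound on $Y$ plus Markov's inequality suffices; it uses only $\bar a_1<\infty$ and your observation that $K$ jumps only downward, so the $(K,P)$ bound does not see the jumps.

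This buys you three things:
\begin{itemize}
\item Pathwise uniqueness is immediate by induction over the atoms.
\item No Lipschitz property of $\mu$ or $\omega$ is needed; measurability and the growth bounds suffice.
\item You sidestep a delicate point in the paper's appeal to standard theory. The estimate \eqref{eq:indicator_lipschitz} is an $L^1(dr)$ bound, whereas the classical Brownian--Poisson existence theorems assume $L^2$-Lipschitz jump coefficients. For the thinning indicator, $\int_0^\infty|\mathbf 1_{\{r\le u\}}-\mathbf 1_{\{r\le v\}}|^2\,dr=|u-v|$, which is not $O(|u-v|^2)$. With bounded intensity, interlacing is precisely what makes the truncated system well posed.
\end{itemize}

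The paper's route, in turn, gives the joint bound \eqref{eq:joint_bound_WP} on $\mathbb E\sup_{t\le T}(K_t^2+P_t^2+Y_t^2)$ directly, and that is what yields the local square integrability claimed in the statement. In your write-up this part is only sketched. You still need the second-moment analogue: the compensated jump integral controlled by $\bar a_2$, and a coupled Gronwall argument with $\mathbb E\sup_{t\le T}P_t^2$.

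Both arguments rely on $\nu(\mathcal Z)=1$, which you correctly make explicit.
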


\begin{proof} 
Let us consider the proof divided in four steps:
\begin{enumerate}

\item \textit{Poisson embedding and local construction.} We first construct the marked point process by means of a Poisson embedding. Given a nonnegative predictable process
$$ \lambda_t = \mu(P_{t-})+Y_{t-}, $$
we define
\begin{equation}
	N(dt,d\zeta) = \int_{\mathbb R_{\geq0}} \mathbf 1_{\{r\leq\lambda_t\}} \,M(dt,d\zeta,dr).
	\label{eq:poisson_embedding_wp}
\end{equation}
By the standard Poisson embedding, or thinning, construction for point processes with predictable stochastic intensity, the random measure $N$ defined by \eqref{eq:poisson_embedding_wp} has predictable compensator
\begin{equation}
\Lambda(dt,d\zeta) = \lambda_t\,\nu(d\zeta)\,dt = \bigl(\mu(P_{t-})+Y_{t-}\bigr)\nu(d\zeta)\,dt 
\label{eq:predictable_compensator}
\end{equation}
See, for instance, the standard theory of point processes and Poisson embedding in \cite{bremaud_point_1981}; the same construction is used in the classical theory of Hawkes processes in \cite{bremaud_stability_1996}. 
We next localize the coupled system. Since
$$
\mathcal S_H = (0,\infty)\times(0,\infty)\times[0,\infty),
$$
we define, for $n\in\mathbb N$,
\begin{equation}
D_n := \left[\frac1n,n\right] \times \left[\frac1n,n\right] \times [0,n].	
\label{eq:D_n}
\end{equation}
Then
$$ D_n\subset\mathcal S_H, \qquad D_n\subset D_{n+1}, \qquad \bigcup_{n\geq1}D_n=\mathcal S_H. $$

By the assumed local Lipschitz continuity of $b^{\mathrm{cap}}$, $b^{\mathrm{pol}}$, $\mu$, and $\omega$, their restrictions to $D_n$ satisfy Lipschitz estimates with constants depending on $n$. We extend these restrictions outside $D_n$ to globally Lipschitz coefficients, denoted by
$$
b_n^{\mathrm{cap}}, \qquad
b_n^{\mathrm{pol}}, \qquad
\mu_n, \qquad
\omega_n,
$$
which coincide with the original coefficients on $D_n$.
We also truncate the stochastic intensity by setting
\begin{equation}
	\lambda_t^{(n)} := \left( \mu_n(P_{t-}^{(n)}) + Y_{t-}^{(n)} \right)\wedge n.
	\label{eq:truncated_intensity}
\end{equation}
The corresponding marked counting measure is defined by
\begin{equation}
	N^{(n)}(dt,d\zeta) = \int_{\mathbb R_{\geq0}} \mathbf 1_{\{r\leq\lambda_t^{(n)}\}} \,M(dt,d\zeta,dr).
	\label{eq:truncated_counting_measure}
\end{equation}
In particular, $0\leq\lambda_t^{(n)}\leq n,$ so the jump intensity of the truncated system is bounded.

The truncated state process
$$ X_t^{(n)} = (K_t^{(n)},P_t^{(n)},Y_t^{(n)}) $$
therefore solves
\begin{align}
	dK_t^{(n)}
	&= b_n^{\mathrm{cap}} (K_t^{(n)},P_t^{(n)},C_t,\theta_t)\,dt - \int_{\mathcal Z} \bigl( 1-\omega_n(K_{t-}^{(n)},P_{t-}^{(n)},\zeta ) \bigr) K_{t-}^{(n)} \,N^{(n)}(dt,d\zeta),
	\label{eq:truncated_K}
	\\
	dP_t^{(n)}
	&= b_n^{\mathrm{pol}} (K_t^{(n)},P_t^{(n)},C_t,\theta_t)\,dt + \sigma P_t^{(n)}\,dW_t,
	\label{eq:truncated_P}
	\\
	dY_t^{(n)}
	&= -bY_t^{(n)}\,dt + \int_{\mathcal Z} a(\zeta)\,N^{(n)}(dt,d\zeta).
	\label{eq:truncated_Y}
\end{align}

It is useful to note that, although the map
$ x\longmapsto \mathbf 1_{\{r\leq\lambda^{(n)}(x)\}} $
is not pointwise Lipschitz for fixed $r$, it satisfies the standard integrated estimate
\begin{equation}
	\int_0^\infty \left| \mathbf 1_{\{r\leq\lambda^{(n)}(x)\}} - \mathbf 1_{\{r\leq\lambda^{(n)}(x')\}} \right| \,dr = \left| \lambda^{(n)}(x)-\lambda^{(n)}(x') \right|.
	\label{eq:indicator_lipschitz}
\end{equation}
Indeed, for any $u,v\geq0$,
$$ \int_0^\infty \left| \mathbf 1_{\{r\leq u\}} - \mathbf 1_{\{r\leq v\}} \right|\,dr = |u-v|. $$
Since $\mu_n$ is globally Lipschitz and
$ (u,y)\longmapsto(u+y)\wedge n $
is Lipschitz, the right-hand side of \eqref{eq:indicator_lipschitz} is bounded by $ A_n|x-x'|$
for some constant $A_n>0$. Thus the thinning coefficient satisfies the appropriate integrated Lipschitz estimate required in the Brownian-Poisson SDE theory.
Together with $ a\in L^2(\nu) $ and the global Lipschitz and growth bounds of the truncated coefficients, this places \eqref{eq:truncated_K}--\eqref{eq:truncated_Y}within the standard strong well-posedness theory for stochastic differential equations driven jointly by Brownian motion and Poisson random measures. Consequently, for every $n\in\mathbb N$, the truncated system admits a pathwise unique strong c\`adl\`ag solution $X^{(n)}$ on $[0,T]$; see, 
for example, \cite{applebaum_levy_2009} and the classical treatment of SDEs driven by Poisson point processes in \cite{ikeda_watanabe_1989}.

Define the exit time
\begin{equation}
	\tau_n := \inf\left\{ t\geq0: X_t^{(n)}\notin D_n \right\}.
	\label{eq:exit_time_Dn}
\end{equation}
Since the truncated coefficients coincide with the original coefficients on $D_n$, the process $X^{(n)}$ solves the original system on $[0,\tau_n)$. Moreover, the truncated systems can be chosen consistently. In particular, if $m>n$, then both $X^{(m)}$ and $X^{(n)}$ solve the same stochastic equation as long as the state remains in $D_n$. Pathwise uniqueness therefore yields
$$ X_t^{(m)} = X_t^{(n)}, \qquad 0\leq t<\tau_n \quad\text{a.s.} $$
Hence the family $ \{X^{(n)}\}_{n\geq1} $ defines a pathwise unique maximal local solution $X$ on
$$ [0,\tau_\infty), \qquad \tau_\infty := \lim_{n\to\infty}\tau_n. $$
It remains to show that $$ \tau_\infty=\infty \qquad\text{a.s.}, $$ which is established below by deriving uniform moment bounds and ruling out explosion on finite time intervals.

\item\textit{Moment estimate for the Hawkes excitation.} We set
$$ \bar a_1 := \int_{\mathcal Z}a(\zeta)\,\nu(d\zeta), \qquad \bar a_2 := \int_{\mathcal Z}a(\zeta)^2\,\nu(d\zeta),
$$
which are finite by assumption \eqref{eq:int_a_a^2_finite}. On $[0,\tau_n)$, the localized process coincides with the original process and therefore
$$
dY_t = -bY_t\,dt + \int_{\mathcal Z}a(\zeta)\,N(dt,d\zeta).
$$
Applying It\^o's formula for semimartingales with jumps to
$f(y)=y^2$ yields
\begin{align}
	dY_t^2
	&= -2bY_t^2\,dt	+ \int_{\mathcal Z} \left[ (Y_{t-}+a(\zeta))^2-Y_{t-}^2 \right] N(dt,d\zeta) \notag\\
	&= -2bY_t^2\,dt + \int_{\mathcal Z} \left[ 2Y_{t-}a(\zeta)+a(\zeta)^2 \right] N(dt,d\zeta).
	\label{eq:Y2_WP}
\end{align}
For the It\^o formula for jump semimartingales, see, for example, 
\cite{protter_stochastic_2005} or \cite{applebaum_levy_2009}.

Stopping at $t\wedge\tau_n$, taking expectations, and applying the compensation formula for integer-valued random measures with predictable compensator \eqref{eq:predictable_compensator}, we obtain
\begin{align}
	\mathbb E[Y_{t\wedge\tau_n}^2]
	&= y^2 - 2b\, \mathbb E \int_0^{t\wedge\tau_n}Y_s^2\,ds + \mathbb E \int_0^{t\wedge\tau_n} \bigl(\mu(P_s)+Y_s\bigr) \bigl(2\bar a_1Y_s+\bar a_2\bigr)\,ds.
	\label{eq:Y2_exp_WP}
\end{align}
Here $P_{s-}$ since $P$ is continuous, while $Y_{s-}=Y_s$ for Lebesgue-a.e.\ $s$. By the linear-growth assumption on $\mu$, \eqref{eq:mu_growth_assumpt}, and Young's inequality, there exists a constant $A>0$, independent of $n$, such that
$$ \bigl(\mu(p)+y\bigr) \bigl(2\bar a_1y+\bar a_2\bigr) \leq A(1+p^2+y^2), \qquad p,y\geq0.
$$
Consequently,
\begin{equation}
	\mathbb E[Y_{t\wedge\tau_n}^2] \leq y^2 + A\int_0^t \left( 1 + \mathbb E[P_{s\wedge\tau_n}^2] + \mathbb E[Y_{s\wedge\tau_n}^2] \right)ds. \label{eq:Y2_bound_WP}
\end{equation}

For the subsequent nonexplosion argument it is convenient to obtain a maximal estimate. Since $Y\geq0$, the integral representation
\begin{equation} \label{eq:Y_integral_representation}
    Y_t = ye^{-bt} + \int_{(0,t]\times\mathcal Z} e^{-b(t-s)}a(\zeta)\,N(ds,d\zeta)
\end{equation}
implies, for $0\leq u\leq t\wedge\tau_n$,
$$ 
Y_u \leq y+ \int_{(0,u]\times\mathcal Z}a(\zeta)\,N(ds,d\zeta).
$$
Decomposing the last integral into its compensated martingale part and its compensator, and applying the Burkholder-Davis-Gundy inequality for purely discontinuous martingales together with the preceding moment bounds, we obtain, for some constant $A_T>0$ independent of $n$,
\begin{equation}
	\mathbb E \left[ \sup_{0\leq s\leq T\wedge\tau_n}Y_s^2 \right] \leq A_T \left[ 1+y^2 + \int_0^T \mathbb E \left( P_{s\wedge\tau_n}^2 + Y_{s\wedge\tau_n}^2 \right)ds \right].
	\label{eq:Y2_sup_WP}
\end{equation}

\item \textit{Moment estimates for capital and pollution.} We first consider the capital process. At a jump time $t$ with mark $\zeta$,
$$
K_t = \omega(K_{t-},P_{t-},\zeta)K_{t-}.
$$
Since $0<\omega(k,p,\zeta)\leq1$,
we have
$$
K_t^2-K_{t-}^2 = \left( \omega(K_{t-},P_{t-},\zeta)^2-1 \right)K_{t-}^2 \leq0.
$$
Thus disaster jumps cannot increase $K^2$. Applying the It\^o formula for jump semimartingales to $K^2$ on $[0,t\wedge\tau_n]$ therefore gives
\begin{align}
	K_{t\wedge\tau_n}^2
	&= k^2 + 2\int_0^{t\wedge\tau_n} K_s b^{\mathrm{cap}}(K_s,P_s,C_s,\theta_s)\,ds \notag\\
	&\quad + \int_{(0,t\wedge\tau_n]\times\mathcal Z} \left[ \omega(K_{s-},P_{s-},\zeta)^2-1 \right] K_{s-}^2\,N(ds,d\zeta) \notag\\
	&\leq k^2 + 2\int_0^{t\wedge\tau_n} K_s b^{\mathrm{cap}}(K_s,P_s,C_s,\theta_s)\,ds. 
	\label{eq:K2_ito_WP}
\end{align}

Using the linear-growth assumption on $b^{\mathrm{cap}}$ and Young's
inequality, there exists $A>0$, independent of $n$, such that
\begin{align}
	\mathbb E \left[ \sup_{0\leq u\leq t\wedge\tau_n}K_u^2 \right] \leq A \Bigg[ k^2 + \int_0^t \Bigg( 1 + \mathbb E \left[ \sup_{0\leq r\leq s\wedge\tau_n} (K_r^2+P_r^2) \right] + \mathbb E[C_s^2] \Bigg)ds \Bigg].
	\label{eq:K2_bound_WP}
\end{align}

For the pollution process, It\^o's formula yields
\begin{align}
	P_{t\wedge\tau_n}^2 = p^2 + 2\int_0^{t\wedge\tau_n} P_s b^{\mathrm{pol}}(K_s,P_s,C_s,\theta_s)\,ds + \sigma^2 \int_0^{t\wedge\tau_n}P_s^2\,ds + 2\sigma \int_0^{t\wedge\tau_n}P_s^2\,dW_s.
	\label{eq:P2_ito_WP}
\end{align}
Taking the supremum, expectations, and applying the Burkholder-Davis-Gundy inequality to the continuous local martingale term, followed by Young's inequality, gives
\begin{align}
	\mathbb E \left[ \sup_{0\leq u\leq t\wedge\tau_n}P_u^2 \right] \leq A \Bigg[ p^2 + \int_0^t \Bigg(1 + \mathbb E \left[ \sup_{0\leq r\leq s\wedge\tau_n} (K_r^2+P_r^2) \right] + \mathbb E[C_s^2] \Bigg)ds \Bigg],
	\label{eq:P2_bound_WP}
\end{align}
where $A>0$ may depend on $T$, $\sigma$, and the growth constants,
but not on $n$. For the Burkholder-Davis-Gundy inequality, see, for example, \cite{revuz_yor_1999}.
 
Combining \eqref{eq:Y2_sup_WP}, \eqref{eq:K2_bound_WP}, and \eqref{eq:P2_bound_WP}, and applying Gronwall's inequality, we obtain
\begin{equation}
	\mathbb E \left[ \sup_{0\leq t\leq T\wedge\tau_n} \left( K_t^2+P_t^2+Y_t^2 \right) \right] \leq A_T \left( 1+k^2+p^2+y^2 + \mathbb E\int_0^T C_t^2\,dt \right),
	\label{eq:joint_bound_WP}
\end{equation}
where $A_T<\infty$ is independent of $n$.
	
\item \textit{Nonexplosion, preservation of the state space, and global continuation.} Fix $T<\infty$. We first show that the marked point process cannot have infinitely many jumps before maximal lifetime. For every $n\in\mathbb N$, the compensation formula applied to the stopped counting measure yields
	\begin{align}
		\mathbb E \left[ N((0,T\wedge\tau_n]\times\mathcal Z) \right]
		&= \mathbb E \int_0^{T\wedge\tau_n} \bigl( \mu(P_{t-})+Y_{t-} \bigr)\,dt \notag\\
		&= \mathbb E \int_0^{T\wedge\tau_n} \bigl( \mu(P_t)+Y_t \bigr)\,dt.
		\label{eq:stopped_expected_number_jumps}
	\end{align}
By the linear-growth assumption on $\mu$ \eqref{eq:mu_growth_assumpt}, Cauchy-Schwarz, and \eqref{eq:joint_bound_WP}, there exists $B_T<\infty$, independent of $n$, such that
\begin{equation}
	\sup_{n\geq1} \mathbb E \left[ N((0,T\wedge\tau_n]\times\mathcal Z) \right] \leq B_T.
	\label{eq:uniform_jump_bound}
\end{equation}
Since $\tau_n\uparrow\tau_\infty$, the random variables $N((0,T\wedge\tau_n]\times\mathcal Z)$ increase to $N((0,T\wedge\tau_\infty)\times\mathcal Z)$.
Therefore, by the monotone convergence theorem,
\begin{align}
	\mathbb E \left[ N((0,T\wedge\tau_\infty)\times\mathcal Z) \right] = \lim_{n\to\infty} \mathbb E \left[ N((0,T\wedge\tau_n]\times\mathcal Z) \right] \leq B_T <\infty.
	\label{eq:jumps_before_lifetime}
\end{align}
Consequently,
\begin{equation}
	N((0,T\wedge\tau_\infty)\times\mathcal Z) <\infty \qquad\text{a.s.}
	\label{eq:finitely_many_jumps_before_lifetime}
\end{equation}
Thus there can be no accumulation of disaster times strictly before,
or approaching, a finite maximal lifetime.

We next rule out escape of the state variables to $+\infty$. For $R>0$, we define
$$
\sigma_R := \inf\left\{ t<\tau_\infty: K_t+P_t+Y_t\geq R \right\}.
$$
Then, for every $n$ sufficiently large relative to $R$,
$$
\{\sigma_R\leq T,\ \sigma_R<\tau_n\} \subseteq \left\{ \sup_{0\leq t\leq T\wedge\tau_n} (K_t+P_t+Y_t)\geq R \right\}.
$$
Hence, by Markov's inequality,
\begin{align}
	\mathbb P(\sigma_R\leq T,\ \sigma_R<\tau_n)
	&\leq \frac{1}{R^2} \mathbb E \left[ \sup_{0\leq t\leq T\wedge\tau_n} (K_t+P_t+Y_t)^2 \right] \notag\\
	&\leq \frac{3}{R^2} \mathbb E \left[ \sup_{0\leq t\leq T\wedge\tau_n} (K_t^2+P_t^2+Y_t^2)
	\right].
\end{align}
Using \eqref{eq:joint_bound_WP} and then letting $n\to\infty$ gives
\begin{equation}
	\mathbb P(\sigma_R\leq T) \leq \frac{3A_T}{R^2}\left( 1+k^2+p^2+y^2 + \mathbb E\int_0^T C_t^2\,dt \right).
	\label{eq:upper_explosion_probability}
\end{equation}
Letting $R\to\infty$ yields
$$
\mathbb P \left( \sup_{0\leq t<T\wedge\tau_\infty} (K_t+P_t+Y_t)=\infty \right)=0.
$$
Thus the maximal local solution cannot cease to exist because of
escape to $+\infty$ in finite time. It remains to exclude a finite exit through the lower boundary of $\mathcal S_H$. By \eqref{eq:finitely_many_jumps_before_lifetime}, on every finite time interval before $\tau_\infty$ there are almost surely only finitely many disaster times. At every such disaster time, $ K_t = \omega(K_{t-},P_{t-},\zeta)K_{t-}$,
and the assumption \eqref{eq:w_at_disaster} implies that a strictly positive value of $K$ remains strictly positive after the jump. The process $P$ does not jump.

Suppose now, for contradiction, that for some $T <\infty$
$$
\mathbb P(\tau_\infty\leq T)>0.
$$
On the event $\{\tau_\infty\leq T\}$, escape to $+\infty$ has already been ruled out, and \eqref{eq:finitely_many_jumps_before_lifetime} rules out accumulation of disaster times. Hence the only remaining possible obstruction to continuation would be attainment of the lower boundary $K=0$ or $P=0$. This is excluded by Assumption~\ref{Assumptions}(e) and the fact that each jump preserves strict positivity. We obtain a contradiction.
Therefore,
\begin{equation}
	\tau_\infty=\infty \qquad\text{a.s.}
	\label{eq:global_lifetime}
\end{equation}
Hence the maximal local solution extends uniquely to a strong solution on the whole interval $[0,\infty)$.

We may now remove the stopping from the jump-count estimate. Since $\tau_\infty=\infty$ a.s., for every finite $T$,
$$
T\wedge\tau_n\uparrow T \qquad\text{a.s.}
$$
and therefore, by monotone convergence,
\begin{align}
	\mathbb E \left[ N((0,T]\times\mathcal Z) \right] = \mathbb E \left[ \int_0^T \bigl(\mu(P_t)+Y_t\bigr)\,dt \right] <\infty.
	\label{eq:expected_number_jumps}
\end{align}
Since $N((0,T]\times\mathcal Z)$ is a nonnegative integer-valued random variable, possibly taking the value $+\infty$, finite expectation implies
$$
N((0,T]\times\mathcal Z)<\infty \qquad\text{a.s.}
$$
Thus the marked point process is locally finite and nonexplosive.

Finally, global pathwise uniqueness follows from pathwise uniqueness of the localized equations. Indeed, if $X$ and $\widetilde X$ are two global strong solutions driven by the same Brownian motion and the same underlying Poisson random measure with the same initial condition and control, then for every $n$ their restrictions up to the corresponding localization time solve the same localized equation. Pathwise uniqueness gives equality up to each localization time, and since the localization times increase to infinity almost surely,
$$
X_t=\widetilde X_t \qquad\text{for every }t\geq0, \quad\text{a.s.}
$$

Finally, since $\tau_\infty=\infty$ a.s., Fatou's lemma applied to \eqref{eq:joint_bound_WP} gives, for every $T<\infty$,
\begin{align}
	\mathbb E \left[ \sup_{0\leq t\leq T} (K_t^2+P_t^2+Y_t^2) \right] \leq A_T \left( 1+k^2+p^2+y^2 + \mathbb E\int_0^T C_t^2\,dt \right) <\infty.
	\label{eq:global_finite_horizon_moment}
\end{align}
Consequently,
\begin{equation} \label{eq:local_square_integrability}
\mathbb E \left[ \int_0^T (K_t^2+P_t^2+Y_t^2)\,dt \right] \leq T\, \mathbb E \left[ \sup_{0\leq t\leq T} (K_t^2+P_t^2+Y_t^2) \right] <\infty.
\end{equation}
Thus the global solution is locally square-integrable in time.
\end{enumerate}
\end{proof}

\begin{remark}
The term \emph{global well-posedness} means that the system admits a pathwise unique strong solution defined on the whole time interval $[0,\infty)$; equivalently, the maximal lifetime satisfies $\tau_\infty=\infty$ a.s. This should be distinguished from global square integrability of the state process. The moment estimates established in \eqref{eq:local_square_integrability} imply local square integrability in time, but no claim is made here that
$$
\mathbb E\left[ \int_0^\infty \left(K_t^2+P_t^2+Y_t^2\right)\,dt \right] <\infty,
$$
which is an additional integrability property and is not part of the global well-posedness assertion.
\end{remark}

Having established that the Hawkes-driven system admits a unique global strong solution, we now turn to the relation between the Hawkes and
Poisson specifications. Recall that the Hawkes intensity can be decomposed as in \eqref{eq:lambda_Hawkes}. Thus the only difference at the level of the disaster intensity is the additional excitation term $Y$. It is therefore natural to first study the long-run behaviour of this component. If $Y$ becomes negligible, then the Hawkes intensity approaches the pollution-dependent Poisson intensity $\mu(P_{t-})$.

\subsection{Asymptotic Poisson approximation}

\n 

The purpose of this subsection is to make the preceding intuition precise. We first establish decay and integrability properties of the excitation process. These properties are then translated into approximation results for the compensator of the marked point process, first on finite time windows and subsequently on the infinite tail.

\begin{theorem}[Asymptotic decay of the Hawkes excitation]
	\label{thm:asymptotic_decay_marked_hawkes}
Assume the hypotheses of Proposition~\ref{prop:wellposed_marked_hawkes}. Set
	$$
	\bar a := \int_{\mathcal Z}a(\zeta)\,\nu(d\zeta),
	$$
and suppose that the marked Hawkes process is subcritical, $\bar a<b$. Assume further that
	\begin{equation}
	\int_0^\infty \mathbb E[\mu(P_t)]\,dt <\infty.
    \label{eq:expect_mu_P_finite}
	\end{equation}
Then
\begin{equation}
\mathbb E[Y_t] \longrightarrow 0 \qquad\text{as }  t \to\infty.
\label{eq:expect_Y_to_zero}
\end{equation}
Moreover,
	\begin{equation}
	\int_0^\infty \mathbb E[Y_t]\,dt = \frac{y + \bar a \displaystyle\int_0^\infty \mathbb E[\mu(P_t)]\,dt}{ b-\bar a } <\infty.
    \label{eq:expect_Y_finite}
	\end{equation}
In particular, $Y_t \longrightarrow 0$ in $L^1$, i.e. the Hawkes self-excitation vanishes asymptotically in $L^1$. However, unless $Y_0=0$ and no jump with positive excitation ever occurs, the process $Y_t$ does not reach zero at any finite time.
\end{theorem}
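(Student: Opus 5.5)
Our plan is to derive a linear integral equation for the first moment $m(t):=\mathbb E[Y_t]$ and then analyse it. We start from the dynamics \eqref{eq:WP_Y} and the compensator \eqref{eq:predictable_compensator}. Integrating \eqref{eq:WP_Y} up to $t\wedge\tau_n$ and taking expectations, the compensation formula gives
\begin{equation*}
\mathbb E[Y_{t\wedge\tau_n}] = y - b\,\mathbb E\int_0^{t\wedge\tau_n} Y_s\,ds + \bar a\,\mathbb E\int_0^{t\wedge\tau_n}\bigl(\mu(P_s)+Y_s\bigr)\,ds .
\end{equation*}
Since $\tau_\infty=\infty$ by Proposition~\ref{prop:wellposed_marked_hawkes}, we let $n\to\infty$. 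This is justified by dominated convergence using the maximal bound \eqref{eq:global_finite_horizon_moment}, together with $\mu(p)\le L_\mu(1+p)$. We obtain
\begin{equation*}
m(t) = y + \int_0^t \bigl(\bar a\,\mathbb E[\mu(P_s)] - (b-\bar a)\,m(s)\bigr)\,ds .
\end{equation*}
The function $s\mapsto \mathbb E[\mu(P_s)]$ is locally integrable and, by \eqref{eq:expect_mu_P_finite}, globally integrable. Hence $m$ is absolutely continuous and satisfies $m'(t)=-(b-\bar a)m(t)+\bar a\,g(t)$ a.e., where $g(t):=\mathbb E[\mu(P_t)]$.

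Solving by variation of constants with $\kappa:=b-\bar a>0$ gives
\begin{equation*}
m(t)=y e^{-\kappa t}+\bar a\int_0^t e^{-\kappa(t-s)}g(s)\,ds .
\end{equation*}
The convergence $m(t)\to0$ then follows because the convolution of the integrable function $g$ with the integrable kernel $e^{-\kappa\cdot}$ tends to zero. To see this, split the integral at $t/2$:
\begin{itemize}
\item on $[0,t/2]$ it is at most $e^{-\kappa t/2}\int_0^\infty g$;
\item on $[t/2,t]$ it is at most $\int_{t/2}^\infty g\to0$.
\end{itemize}
For \eqref{eq:expect_Y_finite} we integrate the mild formula over $[0,\infty)$ and use Tonelli, since all terms are nonnegative. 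This gives $\int_0^\infty m = y/\kappa + (\bar a/\kappa)\int_0^\infty g$, which is exactly the claimed identity. Alternatively, one can integrate the ODE on $[0,T]$, note $m(T)\to0$, and let $T\to\infty$.

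$L^1$ convergence is immediate because $Y\ge0$, so $\mathbb E|Y_t|=m(t)\to0$. For the final claim we use the representation \eqref{eq:Y_integral_representation}. Every term there is nonnegative, and $ye^{-bt}>0$ whenever $y>0$. Each jump with $a(\zeta)>0$ at time $s\le t$ contributes $e^{-b(t-s)}a(\zeta)>0$. Hence $Y_t>0$ for all $t$ unless $y=0$ and no excitation-carrying jump has yet occurred.

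The main obstacle is the passage $n\to\infty$ in the stopped identity, together with the need for $m$ to be finite. Both are handled by the finite-horizon maximal moment bound from Proposition~\ref{prop:wellposed_marked_hawkes}, which requires $\mathbb E\int_0^T C_t^2\,dt<\infty$; this holds by admissibility. A secondary subtlety is measurability and local integrability of $g$, which follows from $\mu(P_s)\le L_\mu(1+P_s)$ and the same bound.
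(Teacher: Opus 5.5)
Your proposal is correct and follows essentially the same route as the paper: you use the compensation formula to get the linear equation $m'=-(b-\bar a)m+\bar a\,\mathbb E[\mu(P_t)]$, solve it by variation of constants, and obtain the integral identity, the $L^1$ decay and the positivity claim from the representation of $Y$, exactly as the paper does. Two of your steps make rigorous what the paper only asserts. Your localization with dominated convergence justifies finiteness of $\mathbb E[Y_t]$, and your split at $t/2$ proves that $\int_0^t e^{-(b-\bar a)(t-s)}\mathbb E[\mu(P_s)]\,ds\to0$; this decay does not follow merely from both factors being in $L^1$.
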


\begin{proof}
From \eqref{eq:WP_Y}, we obtain, after integration,
	$$
     Y_t = y - b\int_0^tY_s\,ds + \int_{(0,t]\times\mathcal Z} a(\zeta)\,N(ds,d\zeta).
	$$
Taking expectations and applying the compensation formula gives
	\begin{align*}
		\mathbb E[Y_t]
		&= y - b\int_0^t\mathbb E[Y_s]\,ds + \mathbb E \int_0^t\int_{\mathcal Z} a(\zeta)\, \bigl(\mu(P_{s-})+Y_{s-}\bigr) \nu(d\zeta)\,ds.
	\end{align*}
	Since $P$ is continuous and $Y_{s-}=Y_s$ for Lebesgue-a.e. $s$,
	this becomes
	\begin{align*}
		\mathbb E[Y_t]
		&= y - b\int_0^t\mathbb E[Y_s]\,ds + \bar a \int_0^t \mathbb E\bigl[\mu(P_s)+Y_s\bigr]\,ds \\
		&= y + \int_0^t \left[ -(b-\bar a)\mathbb E[Y_s] + \bar a\,\mathbb E[\mu(P_s)] \right]ds.
	\end{align*}
Set $m(t):=\mathbb E[Y_t]$. Then $m$ is absolutely continuous and satisfies, for almost every $t\ge0$,
\begin{equation}
m'(t) = -(b-\bar a)m(t) + \bar a\,\mathbb E[\mu(P_t)], \qquad m(0)=y.
\label{eq:m_differential_equation}
\end{equation}
By the variation-of-constants formula,
$$
	m(t) = e^{-(b-\bar a)t}y + \bar a \int_0^t e^{-(b-\bar a)(t-s)} \mathbb E[\mu(P_s)]\,ds. 
$$
	
Let
	$$
f(s):=\mathbb E[\mu(P_s)], \qquad g(s):=e^{-(b-\bar a)s}\mathbf 1_{\{s\ge0\}}.
	$$
By assumption, $f\in L^1(\mathbb R_{\ge0})$, and, since $b-\bar a>0$,
	$
	g\in L^1(\mathbb R_{\ge0}).
	$
Hence
	$$
	(g*f)(t) = \int_0^t e^{-(b-\bar a)(t-s)} f(s)\,ds \longrightarrow 0 \qquad\text{as }t\to\infty.
	$$
Therefore
	\begin{equation} \label{eq:m(t)_convergence_to_0}
	m(t)=\mathbb E[Y_t]\longrightarrow0 \qquad\text{as }t\to\infty.
	\end{equation}
Integrating \eqref{eq:m_differential_equation} over $[0,T]$ gives
	$$
	m(T)-y = -(b-\bar a)\int_0^T m(t)\,dt + \bar a \int_0^T\mathbb E[\mu(P_t)]\,dt.
	$$
Letting $T\to\infty$ and using \eqref{eq:m(t)_convergence_to_0}, we obtain
	$$
	\int_0^\infty m(t)\,dt = \frac{ y + \bar a\displaystyle\int_0^\infty\mathbb E[\mu(P_t)]\,dt}{b-\bar a}.
	$$
	Since $Y_t\ge0$,
	$$
    \|Y_t\|_{L^1} = \mathbb E[Y_t] \longrightarrow 0,
	$$
which proves the $L^1$-convergence.
	
Finally, using the integral representation \eqref{eq:Y_integral_representation}, all terms on the right-hand side are nonnegative. If $y>0$, then
	$
	Y_t\ge ye^{-bt}>0
	$
for every $t < \infty$. Likewise, if a jump occurs at time $\tau$ with mark $\zeta$ such that $a(\zeta)>0$, then
	$$
	Y_t \geq e^{-b(t-\tau)}a(\zeta)>0 \qquad \text{for every finite }t>\tau.
	$$
Thus finite-time extinction of $Y$ does not occur under the exponential-kernel specification.
\end{proof}

The preceding theorem gives the basic asymptotic estimate required for the approximation argument. In particular, the excitation vanishes in mean and its expected cumulative contribution over the infinite horizon is finite. Two immediate consequences are useful. First, the $L^1$ convergence implies convergence in probability. Second, since $Y$ is exactly the difference between the Hawkes intensity and its Poisson baseline, the same estimate directly controls the excess disaster intensity.

\begin{corollary}[Convergence in probability]
	Under the assumptions of Theorem~\ref{thm:asymptotic_decay_marked_hawkes},
	$$
	Y_t\longrightarrow0 \qquad\text{in probability as }t\to\infty.
	$$
\end{corollary}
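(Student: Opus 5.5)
The corollary follows directly from Theorem~\ref{thm:asymptotic_decay_marked_hawkes} together with Markov's inequality. The plan is to use the nonnegativity of $Y$, which is inherited from the integral representation \eqref{eq:Y_integral_representation} since $y\geq0$, $a\geq0$ and $N$ is a counting measure. This gives $|Y_t|=Y_t$, so the $L^1$ norm of $Y_t$ equals $\mathbb E[Y_t]$.

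Fix $\varepsilon>0$. By Markov's inequality applied to the nonnegative random variable $Y_t$,
\begin{equation*}
\mathbb P(|Y_t|>\varepsilon)=\mathbb P(Y_t>\varepsilon)\leq \frac{\mathbb E[Y_t]}{\varepsilon}.
\end{equation*}
By \eqref{eq:expect_Y_to_zero}, which was established in Theorem~\ref{thm:asymptotic_decay_marked_hawkes} under the subcriticality condition $\bar a<b$ and the integrability assumption \eqref{eq:expect_mu_P_finite}, the right-hand side tends to zero as $t\to\infty$. Since $\varepsilon$ was arbitrary, $Y_t\to0$ in probability.

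There is no genuine obstacle here. The only point that needs checking is that $\mathbb E[Y_t]$ is finite for every $t$, so that Markov's inequality is meaningful. This is guaranteed by the global finite-horizon moment bound \eqref{eq:global_finite_horizon_moment} from Proposition~\ref{prop:wellposed_marked_hawkes}, or equivalently by the explicit variation-of-constants formula for $m(t)$ obtained in the proof of the theorem.
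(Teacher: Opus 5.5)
Your proof is correct and is essentially the paper's argument: use nonnegativity of $Y_t$, bound $\mathbb P(Y_t>\varepsilon)$ by Markov's inequality, and conclude from $\mathbb E[Y_t]\to0$ in Theorem~\ref{thm:asymptotic_decay_marked_hawkes}. You do not actually need to check finiteness of $\mathbb E[Y_t]$, since Markov's inequality holds trivially when the expectation is infinite.
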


\begin{proof}
Since $Y_t\geq0$, Markov's inequality yields, for every $\varepsilon>0$,
	$$
	\mathbb P\bigl(|Y_t|>\varepsilon\bigr) = \mathbb P(Y_t>\varepsilon) \leq \frac{\mathbb E[Y_t]}{\varepsilon}.
	$$
By Theorem~\ref{thm:asymptotic_decay_marked_hawkes},
	$$
	\mathbb E[Y_t]\longrightarrow 0 \qquad \text{as }t \to \infty,
	$$
and therefore
	$$
	\mathbb P(Y_t>\varepsilon)\longrightarrow 0 \qquad\text{for every } \varepsilon>0,.
	$$
\end{proof}

Convergence of the state variable $Y$ also has a direct interpretation in terms of the disaster mechanism. Indeed, $Y_{t-}$ is precisely the self-exciting component of the Hawkes intensity. The following corollary therefore reformulates the preceding asymptotic result at the level of the stochastic intensity itself.

\begin{corollary}[Asymptotic decay of the excess disaster intensity] \label{cor:excess_intensity_decay}
Under the assumptions of Theorem~\ref{thm:asymptotic_decay_marked_hawkes},
	$$
	\lambda_t-\mu(P_{t-})=Y_{t-},
	$$
	and consequently
	$$
	\mathbb E \left[ \left| \lambda_t-\mu(P_{t-}) \right| \right] \longrightarrow 0 \qquad\text{as }t\to\infty.
	$$
Thus the excess disaster intensity generated by Hawkes self-excitation vanishes asymptotically in $L^1$.
\end{corollary}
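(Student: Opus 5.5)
The identity $\lambda_t-\mu(P_{t-})=Y_{t-}$ is immediate from the definition of the predictable intensity in the Poisson embedding \eqref{eq:poisson_embedding_wp}, where $\lambda_t=\mu(P_{t-})+Y_{t-}$. So the plan is to reduce the statement to Theorem~\ref{thm:asymptotic_decay_marked_hawkes}, which gives $\mathbb E[Y_t]\to0$. Because $Y\geq0$, we have $|\lambda_t-\mu(P_{t-})|=Y_{t-}$, and it suffices to show
\[
\mathbb E[Y_{t-}]\longrightarrow0 .
\]

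The only point that needs care is passing from $Y_t$ to $Y_{t-}$. The integral representation \eqref{eq:Y_integral_representation} gives
\[
Y_{t-}=ye^{-bt}+\int_{(0,t)\times\mathcal Z}e^{-b(t-s)}a(\zeta)\,N(ds,d\zeta)\leq Y_t ,
\]
because the only contribution dropped is the nonnegative jump at time $t$. Hence $0\leq\mathbb E[Y_{t-}]\leq\mathbb E[Y_t]$. Alternatively, one can use that $\mathbb P(\Delta Y_t\neq0)=0$ for each fixed $t$, since the compensator of $N$ is absolutely continuous in time. Then $Y_{t-}=Y_t$ a.s., and the expectations coincide.

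Combining the two steps, $\mathbb E\bigl[|\lambda_t-\mu(P_{t-})|\bigr]=\mathbb E[Y_{t-}]\leq\mathbb E[Y_t]$. This tends to $0$ by \eqref{eq:expect_Y_to_zero}. No step is genuinely hard. The only subtlety is the left-limit versus value issue, which is handled by the monotone comparison above.
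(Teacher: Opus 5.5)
Your proof is correct. It follows the paper's reduction: $|\lambda_t-\mu(P_{t-})|=Y_{t-}$ by nonnegativity, and then Theorem~\ref{thm:asymptotic_decay_marked_hawkes} is invoked.

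The paper treats the left-limit issue with exactly your ``alternative'' argument. The compensator is absolutely continuous in time, so $\mathbb P(N(\{t\}\times\mathcal Z)>0)=0$ for fixed $t$. Hence $Y_{t-}=Y_t$ a.s., which gives the exact identity $\mathbb E\bigl[|\lambda_t-\mu(P_{t-})|\bigr]=\mathbb E[Y_t]$.

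Your primary route, via $\Delta Y_t=\int_{\mathcal Z}a(\zeta)\,N(\{t\},d\zeta)\geq0$, gives only the inequality $\mathbb E[Y_{t-}]\leq\mathbb E[Y_t]$. It is, however, a pathwise argument that uses only $a\geq0$ and not the structure of the compensator, and it suffices for the limit claimed.
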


\begin{proof}
Since $Y_{t-}\geq0$, it follows that
	$$
	\left| \lambda_t-\mu(P_{t-}) \right| = Y_{t-},
	$$
and hence
	\begin{equation}
		\mathbb E\left[ \left| \lambda_t-\mu(P_{t-}) \right|\right] = \mathbb E[Y_{t-}].
		\label{eq:excess_intensity_Y}
	\end{equation}
Since
	$$
	\Delta Y_t = \int_{\mathcal Z} a(\zeta)\,N(\{t\},d\zeta),
	$$
we have $Y_t-Y_{t-}=\Delta Y_t$. Moreover, the marked point process $N$ has predictable compensator \eqref{eq:predictable_compensator}, which is absolutely continuous with respect to Lebesgue measure in time. Consequently, for every fixed deterministic $t>0$,
	$$
	\mathbb P \left( N(\{t\}\times\mathcal Z)>0 \right) =0.
    $$
Thus $\Delta Y_t=0$ a.s. for every fixed $t$, and therefore $Y_{t-}=Y_t$ a.s.
In particular, $ \mathbb E[Y_{t-}] = \mathbb E[Y_t]$. Combining the result \eqref{eq:expect_Y_to_zero} of Theorem~\ref{thm:asymptotic_decay_marked_hawkes} with \eqref{eq:excess_intensity_Y}, we obtain
	$$
	\mathbb E\left[ \left| \lambda_t-\mu(P_{t-}) \right| \right] = \mathbb E[Y_t] \longrightarrow 0 \qquad\text{as } t \to \infty.
	$$
Hence
	$$
	\lambda_t-\mu(P_{t-}) \longrightarrow 0 \qquad \text{in } L^1,
	$$
which proves that the excess disaster intensity generated by Hawkes self-excitation vanishes asymptotically.
\end{proof}

The previous results concern the pointwise difference between the Hawkes and Poisson intensities. For the subsequent control argument, it is useful to translate this pointwise decay into a statement about the corresponding random measures. We therefore compare the expected integrals of a test function against the Hawkes counting measure with those obtained from the pollution-dependent Poisson compensator.

We first perform this comparison on a fixed time window of length $H>0$. This formulation isolates the contribution of the excitation term and provides an explicit approximation error.

\begin{proposition}[Finite-window Poisson approximation of the Hawkes intensity]	\label{prop:finite_window_poisson_approximation}

Assume the hypotheses of Theorem~\ref{thm:asymptotic_decay_marked_hawkes}, so that
	$$
	\mathbb E[Y_t]\longrightarrow 0 \qquad\text{as }t\to\infty.
	$$
Let $H>0$ be fixed and let
	$$
	\phi: \mathbb R_{\geq0} \times \mathcal Z \longrightarrow\mathbb R
	$$
be continuous and bounded. Then
	\begin{align}
		&\left| \mathbb E\left[ \int_{(T,T+H]\times\mathcal Z} \phi(t,\zeta)\,N(dt,d\zeta) \right] - \mathbb E\left[ \int_T^{T+H}\int_{\mathcal Z} \phi(t,\zeta)\mu(P_{t-})\,\nu(d\zeta)\,dt \right] \right| \notag\\
		&\qquad\leq \|\phi\|_\infty \int_T^{T+H}\mathbb E[Y_t]\,dt.
		\label{eq:finite_window_poisson_error}
	\end{align}
	
Consequently,
	$$
	\lim_{T\to\infty} \left| \mathbb E\left[ \int_{(T,T+H]\times\mathcal Z} \phi(t,\zeta)\,N(dt,d\zeta) \right] - \mathbb E\left[ \int_T^{T+H}\int_{\mathcal Z} \phi(t,\zeta)\mu(P_{t-})\,\nu(d\zeta)\,dt \right] \right| =0.
	$$
	
More precisely, for every $\varepsilon>0$, there exists $T_\varepsilon<\infty$ such that, for every $T\geq T_\varepsilon$,
	$$
	\left| \mathbb E\left[ \int_{(T,T+H]\times\mathcal Z} \phi(t,\zeta)\,N(dt,d\zeta) \right] -
	\mathbb E\left[ \int_T^{T+H}\int_{\mathcal Z} \phi(t,\zeta)\mu(P_{t-})\,\nu(d\zeta)\,dt \right] \right| \leq \|\phi\|_\infty H\varepsilon.
	$$
\end{proposition}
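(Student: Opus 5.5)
The plan is to use the compensation formula for the integer-valued random measure $N$, whose predictable compensator is given by \eqref{eq:predictable_compensator}. Since $\phi$ is deterministic and bounded, the integrand $(t,\zeta)\mapsto\phi(t,\zeta)\mathbf 1_{(T,T+H]}(t)$ is predictable. Its absolute value is dominated by $\|\phi\|_\infty\mathbf 1_{(T,T+H]}(t)$, and by \eqref{eq:expected_number_jumps} we have
\[
\mathbb E\bigl[N((0,T+H]\times\mathcal Z)\bigr]=\mathbb E\int_0^{T+H}\bigl(\mu(P_t)+Y_t\bigr)\,dt<\infty .
\]
The compensation formula therefore applies, first to $\phi^+$ and $\phi^-$ separately and then by linearity, giving
\[
\mathbb E\left[\int_{(T,T+H]\times\mathcal Z}\phi\,dN\right]=\mathbb E\left[\int_T^{T+H}\int_{\mathcal Z}\phi(t,\zeta)\bigl(\mu(P_{t-})+Y_{t-}\bigr)\nu(d\zeta)\,dt\right].
\]
All terms are finite because $\mathbb E\int_T^{T+H}\mu(P_t)\,dt<\infty$ by \eqref{eq:expect_mu_P_finite} (or by the moment bound \eqref{eq:global_finite_horizon_moment}). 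Hence the two expectations can be subtracted.

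The difference is then exactly $\mathbb E\int_T^{T+H}\int_{\mathcal Z}\phi(t,\zeta)Y_{t-}\,\nu(d\zeta)\,dt$. We use three facts: $\nu$ is a probability measure, $Y\ge0$, and $Y_{t-}=Y_t$ for Lebesgue-a.e.\ $t$ (as in Corollary~\ref{cor:excess_intensity_decay}). Together with Fubini (justified by nonnegativity), these bound its absolute value by $\|\phi\|_\infty\int_T^{T+H}\mathbb E[Y_t]\,dt$, which is \eqref{eq:finite_window_poisson_error}. One should flag that if $\nu$ were only $\sigma$-finite, a factor $\nu(\mathcal Z)$ would appear. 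I will take $\nu$ to be the probability measure fixed in Section~3.

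For the limit statements, there are two routes. The first uses \eqref{eq:expect_Y_finite}, i.e.\ $\int_0^\infty\mathbb E[Y_t]\,dt<\infty$. Then the tail integral $\int_T^{T+H}\mathbb E[Y_t]\,dt\le\int_T^\infty\mathbb E[Y_t]\,dt\to0$. The more precise version with the factor $H\varepsilon$ uses \eqref{eq:expect_Y_to_zero} directly: given $\varepsilon>0$, choose $T_\varepsilon$ such that $\mathbb E[Y_t]\le\varepsilon$ for all $t\ge T_\varepsilon$. Then for $T\ge T_\varepsilon$ the window integral is at most $H\varepsilon$.

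The only step needing care is justifying the compensation formula for a signed test function. This requires integrability of $|\phi|$ against both $N$ and its compensator on the window, which the finite-horizon moment estimates of Proposition~\ref{prop:wellposed_marked_hawkes} supply. Everything else is routine.
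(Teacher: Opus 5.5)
Your proposal is correct and follows the paper's argument: apply the compensation formula with compensator $(\mu(P_{t-})+Y_{t-})\nu(d\zeta)\,dt$, subtract the baseline term, and bound the remainder by $\|\phi\|_\infty\int_T^{T+H}\mathbb E[Y_t]\,dt$ using $\nu(\mathcal Z)=1$ and $Y_{t-}=Y_t$ for Lebesgue-a.e.\ $t$. The $H\varepsilon$ bound then follows from $\mathbb E[Y_t]\to0$, as in the paper. Your explicit justification of the compensation formula for signed $\phi$ (via $\phi^\pm$ and the finite expected jump count on the window) and your note on the $\sigma$-finite versus probability-measure inconsistency for $\nu$ tighten steps the paper takes for granted.
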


\begin{proof}
By the compensation formula and the predictable compensator \eqref{eq:predictable_compensator}, we have
	\begin{align*}
		\mathbb E\left[ \int_{(T,T+H]\times\mathcal Z} \phi(t,\zeta)\,N(dt,d\zeta) \right]
		&= \mathbb E\left[ \int_T^{T+H}\int_{\mathcal Z} \phi(t,\zeta) \bigl(\mu(P_{t-})+Y_{t-}\bigr) \nu(d\zeta)\,dt \right].
	\end{align*}
Subtracting the pollution-dependent baseline contribution gives
	\begin{align*}
		&\mathbb E\left[ \int_{(T,T+H]\times\mathcal Z} \phi(t,\zeta)\,N(dt,d\zeta) \right] - \mathbb E\left[ \int_T^{T+H}\int_{\mathcal Z} \phi(t,\zeta)\mu(P_{t-})\,\nu(d\zeta)\,dt \right] \\
		&\qquad= \mathbb E\left[ \int_T^{T+H}\int_{\mathcal Z} \phi(t,\zeta)Y_{t-}\, \nu(d\zeta)\,dt \right] \\
		&\qquad \leq \mathbb E\left[ \int_T^{T+H}\int_{\mathcal Z} |\phi(t,\zeta)|Y_{t-}\, \nu(d\zeta)\,dt \right] \leq \|\phi\|_\infty \int_T^{T+H} \mathbb E[Y_t]\,dt,
	\end{align*}
where we used $\nu(\mathcal Z)=1$ and the fact that $Y_{t-}=Y_t$ for Lebesgue-a.e.\ $t$.
	
Since
	$
	\mathbb E[Y_t]\longrightarrow0,
	$
for every $\varepsilon>0$ there exists $T_\varepsilon$ such that
	$$
	\mathbb E[Y_t]\leq\varepsilon, \qquad t \geq T_\varepsilon.
	$$
Thus, for every $T\geq T_\varepsilon$,
	$$
	\int_T^{T+H}\mathbb E[Y_t]\,dt \leq H \varepsilon,
	$$
which proves the claim.
\end{proof}
The finite-window estimate shows that, sufficiently far in the future, the expected contribution of Hawkes self-excitation over any fixed time interval becomes arbitrarily small. For the infinite-horizon control problem, however, a tail estimate is more directly relevant. The integrability established in Theorem~\ref{thm:asymptotic_decay_marked_hawkes} allows the preceding argument to be extended from finite windows to the entire remaining time horizon.

\begin{corollary}[Approximation on the infinite tail]
\label{cor:infinite_tail_approximation}
Under the assumptions of Theorem~\ref{thm:asymptotic_decay_marked_hawkes}, let
	$$
	\phi:\mathbb R_{\geq0}\times\mathcal Z\to\mathbb R
	$$
be continuous and bounded. Then, for every $T\geq0$,
	\begin{align}
		&\left| \mathbb E\left[ \int_{(T,\infty)\times\mathcal Z} \phi(t,\zeta)\,N(dt,d\zeta) \right] - \mathbb E\left[ \int_T^\infty\int_{\mathcal Z} \phi(t,\zeta)\mu(P_{t-})\,\nu(d\zeta)\,dt \right] \right| \leq \|\phi\|_\infty \int_T^\infty\mathbb E[Y_t]\,dt.
		\label{eq:infinite_tail_approximation}
	\end{align}
Consequently,
	$$
	\left| \mathbb E\left[ \int_{(T,\infty)\times\mathcal Z} \phi(t,\zeta)\,N(dt,d\zeta) \right] - \mathbb E\left[ \int_T^\infty\int_{\mathcal Z} \phi(t,\zeta)\mu(P_{t-})\, \nu(d\zeta)\,dt \right] \right| \longrightarrow0
	$$
as $T\to\infty$.
\end{corollary}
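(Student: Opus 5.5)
The plan is to repeat the argument of Proposition~\ref{prop:finite_window_poisson_approximation}, this time on the unbounded window $(T,\infty)$, and to use the integrability \eqref{eq:expect_Y_finite} from Theorem~\ref{thm:asymptotic_decay_marked_hawkes} in place of pointwise decay.

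First we make sure every quantity is finite. Since $\phi$ is bounded, $|\phi|\,\mathbf 1_{(T,T']}$ is a bounded nonnegative predictable integrand. The compensation formula with the compensator \eqref{eq:predictable_compensator}, applied on $(T,T']$ for finite $T'$ and then combined with monotone convergence as $T'\to\infty$, gives
\begin{align*}
\mathbb E\Big[\int_{(T,\infty)\times\mathcal Z}|\phi|\,N(dt,d\zeta)\Big]
\leq \|\phi\|_\infty\int_T^\infty\big(\mathbb E[\mu(P_t)]+\mathbb E[Y_t]\big)\,dt.
\end{align*}
This is finite by \eqref{eq:expect_mu_P_finite} and \eqref{eq:expect_Y_finite}. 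In the same way, the baseline term $\mathbb E\int_T^\infty\int_{\mathcal Z}\phi\,\mu(P_{t-})\,\nu(d\zeta)\,dt$ is absolutely convergent. Both expectations in \eqref{eq:infinite_tail_approximation} are therefore well defined, and we may subtract them.

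Next we apply the compensation formula to the signed integrand $\phi\,\mathbf 1_{(T,\infty)}$. It is legitimate because of the absolute integrability just shown; alternatively, split $\phi=\phi^+-\phi^-$ and treat each part separately. This yields
\begin{align*}
\mathbb E\Big[\int_{(T,\infty)\times\mathcal Z}\phi\,N(dt,d\zeta)\Big]
=\mathbb E\Big[\int_T^\infty\int_{\mathcal Z}\phi(t,\zeta)\big(\mu(P_{t-})+Y_{t-}\big)\,\nu(d\zeta)\,dt\Big].
\end{align*}
Subtracting the baseline term leaves $\mathbb E\int_T^\infty\int_{\mathcal Z}\phi\,Y_{t-}\,\nu(d\zeta)\,dt$. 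We bound its absolute value using $|\phi|\le\|\phi\|_\infty$, $\nu(\mathcal Z)=1$, $Y\ge0$, the identity $Y_{t-}=Y_t$ for Lebesgue-a.e.\ $t$, and Tonelli. The result is $\|\phi\|_\infty\int_T^\infty\mathbb E[Y_t]\,dt$, which is \eqref{eq:infinite_tail_approximation}.

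For the convergence statement, \eqref{eq:expect_Y_finite} says that $t\mapsto\mathbb E[Y_t]$ is in $L^1(\mathbb R_{\ge0})$. Its tail integrals $\int_T^\infty\mathbb E[Y_t]\,dt$ therefore tend to $0$ as $T\to\infty$, by dominated convergence applied to $\mathbf 1_{(T,\infty)}\,\mathbb E[Y_\cdot]$.

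The only step needing real care is the first one, namely justifying the compensation formula on an infinite horizon for a signed integrand. The finite-horizon identity holds because all relevant moments are locally bounded (Proposition~\ref{prop:wellposed_marked_hawkes}). Passing $T'\to\infty$ then uses monotone convergence for $\phi^\pm$ together with the finiteness of $\int_0^\infty\mathbb E[\mu(P_t)+Y_t]\,dt$, which guarantees that no $\infty-\infty$ ambiguity arises.
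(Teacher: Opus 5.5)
Your proposal is correct and follows essentially the same route as the paper. Both apply the compensation formula with compensator $(\mu(P_{t-})+Y_{t-})\,\nu(d\zeta)\,dt$, cancel the baseline term, bound the remainder by $\|\phi\|_\infty\int_T^\infty\mathbb E[Y_t]\,dt$ via Tonelli and $Y_{t-}=Y_t$ for Lebesgue-a.e.\ $t$, and use the integrability \eqref{eq:expect_Y_finite} to make the tail integrals vanish. Your truncation to $(T,T']$ with monotone convergence and the split $\phi=\phi^+-\phi^-$ only spell out the infinite-horizon justification that the paper asserts directly. Since the compensation identity holds in $[0,\infty]$ for nonnegative predictable integrands, your appeal to the finite-horizon moment bounds is not actually needed.
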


\begin{proof}
Since $\phi$ is continuous and bounded, it is Borel measurable, and hence the deterministic map
	$$
    (\omega,t,\zeta) \longmapsto \phi(t,\zeta)
	$$
defines an admissible predictable integrand for the compensation formula. Moreover, by the assumption of Theorem~\ref{thm:asymptotic_decay_marked_hawkes}, \eqref{eq:expect_mu_P_finite}, and \eqref{eq:expect_Y_finite},
	\begin{align*}
		&\mathbb E\left[ \int_T^\infty\int_{\mathcal Z} |\phi(t,\zeta)| \bigl(\mu(P_{t-})+Y_{t-}\bigr) \nu(d\zeta)\,dt \right] \\
		&\qquad \leq \|\phi\|_\infty \left( \int_T^\infty \mathbb E[\mu(P_t)]\,dt + \int_T^\infty	\mathbb E[Y_t]\,dt \right) < \infty,
	\end{align*}
where we used $\nu(\mathcal Z)=1$ and the fact that $P_{t-}=P_t$ and $Y_{t-}=Y_t$ for Lebesgue-a.e.\ $t$. Thus, the compensation formula yields
	\begin{align}
		\mathbb E\left[\int_{(T,\infty)\times\mathcal Z}	\phi(t,\zeta)\,N(dt,d\zeta) \right]
		&= \mathbb E\left[ \int_T^\infty\int_{\mathcal Z} \phi(t,\zeta) \bigl(\mu(P_{t-})+Y_{t-}\bigr) \nu(d\zeta)\,dt \right].
		\label{eq:tail_compensation}
	\end{align}
Subtracting the pollution-dependent baseline contribution and taking absolute values, we obtain
\begin{align*}
&\left| \mathbb E\left[ \int_{(T,\infty)\times\mathcal Z} \phi(t,\zeta)\,N(dt,d\zeta) \right] - \mathbb E\left[ \int_T^\infty\int_{\mathcal Z} \phi(t,\zeta)\mu(P_{t-})\,\nu(d\zeta)\,dt \right] \right|
\\
&\qquad \leq \mathbb E\left[ \int_T^\infty\int_{\mathcal Z} |\phi(t,\zeta)|Y_{t-}\,\nu(d\zeta)\,dt \right]
\\
&\qquad \leq \|\phi\|_\infty \int_T^\infty\mathbb E[Y_t]\,dt,
\end{align*}
which proves \eqref{eq:infinite_tail_approximation}.

Because $Y_{t-}\geq0$, Tonelli's theorem yields
\begin{align*}
\mathbb E\left[ \int_T^\infty\int_{\mathcal Z} Y_{t-}\,\nu(d\zeta)\,dt \right]
&= \int_T^\infty\int_{\mathcal Z} \mathbb E[Y_{t-}]\,\nu(d\zeta)\,dt = \int_T^\infty \mathbb E[Y_{t-}]\,dt,
\end{align*}
where we used $\nu(\mathcal Z)=1$. Since $Y$ is c\`adl\`ag, we have $Y_{t-}=Y_t$ for Lebesgue-a.e.\ $t$, almost surely. Therefore,
$$
\int_T^\infty \mathbb E[Y_{t-}]\,dt = \int_T^\infty \mathbb E[Y_t]\,dt.
$$
Combining the preceding estimates proves \eqref{eq:infinite_tail_approximation}.

Finally, by \eqref{eq:expect_Y_finite}, the nonnegative function $t\mapsto\mathbb E[Y_t]$ is integrable on $\mathbb R_{\geq0}$. Hence
$$
\int_T^\infty\mathbb E[Y_t]\,dt \longrightarrow0 \qquad\text{as }T\to\infty.
$$
Therefore, by \eqref{eq:infinite_tail_approximation}, the contribution of the Hawkes self-excitation on the infinite tail vanishes as $T\to\infty$.
\end{proof}

\subsection{Small-excitation approximation of the value function}

\n 

The preceding results describe an asymptotic approximation in time: under suitable integrability conditions, the self-exciting component of the Hawkes intensity becomes negligible in the distant future. This observation alone, however, does not yield a uniform estimate for the difference between the corresponding infinite-horizon value functions. To obtain such an estimate, we now introduce a small-excitation regime. More precisely, we scale the excitation generated by each disaster mark according to
$$
    a_\eta(\zeta)=\eta a(\zeta), \qquad \eta>0,
$$
and study the limit $\eta\downarrow0$. In this regime, the Hawkes model approaches the Poisson model directly at the level of the controlled generator. The remaining task is to control the cumulative contribution of the excitation term uniformly over admissible controls.

The first step is to obtain a quantitative estimate for the excitation process under this scaling. Since the generator comparison below involves the product of $Y^\eta$ with the jump increment of the Poisson value function, an unweighted estimate for $Y^\eta$ alone is not sufficient. We therefore establish a weighted discounted bound that is uniform both in $\eta$ and over the admissible controls.

\begin{proposition}[Uniform weighted small-excitation estimate]
	\label{prop:small_excitation_weighted_estimate}
	Let $\eta_0>0$ and, for each $\eta\in(0,\eta_0]$, define
	$$
	a_\eta(\zeta):=\eta a(\zeta), \qquad \zeta\in\mathcal Z.
	$$
	Let
	$$
	X^{H,\eta} = (K^{H,\eta},P^{H,\eta},Y^\eta)
	$$
	denote the corresponding controlled Hawkes state process, whose excitation component satisfies
	\begin{equation}
		\label{eq:Y_eta_dynamics}
		dY_t^\eta = -bY_t^\eta\,dt + \int_{\mathcal Z}
		\eta a(\zeta)\, N^\eta(dt,d\zeta),
		\qquad
		Y_0^\eta=y_\eta\geq0,
	\end{equation}
	where $N^\eta$ has predictable compensator
	\begin{equation}
		\label{eq:N_eta_compensator}
		\bigl(
		\mu(P_{t-}^{H,\eta})+Y_{t-}^\eta
		\bigr)
		\nu(d\zeta)\,dt.
	\end{equation}
	Set
	$$
	 \bar a_1 := \int_{\mathcal Z}a(\zeta)\,\nu(d\zeta) < \infty, \qquad \bar a_2 := \int_{\mathcal Z}a(\zeta)^2\,\nu(d\zeta) < \infty.
	$$
	Assume furthermore that
	\begin{enumerate}
		
		\item 
		There exists $c_y>0$, independent of $\eta$, such that
		\begin{equation}
			\label{eq:y_eta_scaling}
			0\leq y_\eta\leq c_y\eta,
			\qquad
			0<\eta\leq\eta_0.
		\end{equation}
		
		\item
		The family is uniformly subcritical:
		\begin{equation}
			\label{eq:uniform_subcritical_eta}
			\eta_0\bar a_1<b.
		\end{equation}
		
		\item
		There exists $B_\mu<\infty$ such that
		\begin{equation}
			\label{eq:uniform_discounted_mu_moment}
			\sup_{0<\eta\leq\eta_0}
			\sup_{\alpha\in\mathcal A_H(k,p,y_\eta)}
			\mathbb E_{k,p,y_\eta}^{\alpha}
			\left[ \int_0^\infty e^{-\rho t} \mu(P_t^{H,\eta})^2\,dt \right] \leq B_\mu.
		\end{equation}
		
		\item
		For some $q\geq0$, there exists $B_q<\infty$ such that
		\begin{equation}
			\label{eq:uniform_discounted_state_moment}
			\sup_{0<\eta\leq\eta_0}
			\sup_{\alpha\in\mathcal A_H(k,p,y_\eta)}
			\mathbb E_{k,p,y_\eta}^{\alpha}
			\left[ \int_0^\infty e^{-\rho t} \left( 1+ (K_t^{H,\eta})^{2q} + (P_t^{H,\eta})^{2q} \right)dt \right] \leq B_q.
		\end{equation}
		
	\end{enumerate}
Then there exists a constant $A_*>0$, independent of
	$\eta\in(0,\eta_0]$ and of the admissible control, such that
	\begin{equation}
		\label{eq:weighted_excitation_eta_bound}
		\sup_{\alpha\in\mathcal A_H(k,p,y_\eta)}
		\mathbb E_{k,p,y_\eta}^{\alpha}
		\left[ \int_0^\infty e^{-\rho t} Y_t^\eta \left( 1+ (K_t^{H,\eta})^q + (P_t^{H,\eta})^q \right)dt \right] \leq A_*\eta.
	\end{equation}
In particular,
	$$ \sup_{\alpha\in\mathcal A_H(k,p,y_\eta)} \mathbb E_{k,p,y_\eta}^{\alpha} \left[ \int_0^\infty e^{-\rho t} Y_t^\eta \left(1+ (K_t^{H,\eta})^q + (P_t^{H,\eta})^q \right)dt \right] \longrightarrow 0 \qquad \text{ as } \eta\downarrow 0.
	$$
\end{proposition}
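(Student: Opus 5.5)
The plan is to separate the excitation from the polynomial state weight by the Cauchy--Schwarz inequality, and then prove a discounted \emph{second}-moment bound of order $\eta^2$ for $Y^\eta$. Since $(1+K^q+P^q)^2\leq 3(1+K^{2q}+P^{2q})$, we have, for every admissible control,
\begin{align*}
\mathbb E\left[\int_0^\infty e^{-\rho t}Y_t^\eta\bigl(1+(K_t^{H,\eta})^q+(P_t^{H,\eta})^q\bigr)dt\right]
\leq \left(\mathbb E\int_0^\infty e^{-\rho t}(Y_t^\eta)^2dt\right)^{1/2}(3B_q)^{1/2},
\end{align*}
by \eqref{eq:uniform_discounted_state_moment}. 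It therefore suffices to show that $\mathbb E\int_0^\infty e^{-\rho t}(Y^\eta_t)^2\,dt\leq C_Y\eta^2$, with $C_Y$ independent of $\eta\in(0,\eta_0]$ and of the control. Then $A_*:=(3B_qC_Y)^{1/2}$.

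For the second-moment bound, apply It\^o's formula to $e^{-\rho t}(Y_t^\eta)^2$, exactly as in \eqref{eq:Y2_WP} but with $a$ replaced by $\eta a$. We stop at the localizing times $\tau_n$ of Proposition~\ref{prop:wellposed_marked_hawkes} and at a finite horizon $T$, so that all stochastic integrals are true martingales and all expectations are finite. Compensating with \eqref{eq:N_eta_compensator} gives
\begin{align*}
\mathbb E\bigl[e^{-\rho (T\wedge\tau_n)}(Y^\eta_{T\wedge\tau_n})^2\bigr]
+\mathbb E\int_0^{T\wedge\tau_n}e^{-\rho t}\Bigl[(\rho+2b-2\eta\bar a_1)(Y^\eta_t)^2-2\eta\bar a_1\mu(P_t)Y^\eta_t-\eta^2\bar a_2\mu(P_t)-\eta^2\bar a_2Y^\eta_t\Bigr]dt=y_\eta^2 .
\end{align*}
By uniform subcriticality \eqref{eq:uniform_subcritical_eta}, the quadratic coefficient satisfies $\rho+2b-2\eta\bar a_1\geq\kappa:=\rho+2(b-\eta_0\bar a_1)>0$, uniformly in $\eta$. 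The cross terms are handled by Young's inequality:
\begin{align*}
2\eta\bar a_1\mu Y&\leq \tfrac{\kappa}{4}Y^2+\tfrac{4}{\kappa}\eta^2\bar a_1^2\mu^2,\\
\eta^2\bar a_2 Y&\leq\tfrac{\kappa}{4}Y^2+\tfrac1\kappa\eta^4\bar a_2^2,\\
\eta^2\bar a_2\mu&\leq\tfrac12\eta^2\bar a_2(1+\mu^2).
\end{align*}
Drop the nonnegative boundary term and absorb the $Y^2$ pieces into the left-hand side. Then let $T\to\infty$ and $n\to\infty$ by monotone convergence on the left; the right-hand side is controlled by \eqref{eq:uniform_discounted_mu_moment} and $\int_0^\infty e^{-\rho t}dt=1/\rho$. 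This yields
\begin{align*}
\tfrac{\kappa}{2}\,\mathbb E\int_0^\infty e^{-\rho t}(Y^\eta_t)^2dt\leq c_y^2\eta^2+\eta^2\Bigl(\tfrac{4\bar a_1^2}{\kappa}B_\mu+\tfrac{\bar a_2}{2}\bigl(\tfrac1\rho+B_\mu\bigr)+\tfrac{\eta_0^2\bar a_2^2}{\kappa\rho}\Bigr),
\end{align*}
where we used \eqref{eq:y_eta_scaling} and $\eta\leq\eta_0$. This is the desired $C_Y\eta^2$ bound.

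Every constant here depends only on $b,\rho,\eta_0,\bar a_1,\bar a_2,c_y,B_\mu,B_q$. Hence taking the supremum over admissible controls gives \eqref{eq:weighted_excitation_eta_bound}, and the convergence to $0$ as $\eta\downarrow0$ is immediate.

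The main obstacle is making the rearrangement rigorous. The $Y^2$ terms are moved to the left-hand side before we know that $\mathbb E\int e^{-\rho t}(Y^\eta)^2dt<\infty$, and the compensator term $\mu Y$ is only known to be locally integrable. I would handle this by doing all absorptions at the stopped level $T\wedge\tau_n$. There $Y^\eta$ and $P$ are bounded (by the definition of $D_n$), so every term is finite and the absorption is legitimate. Only afterwards would I pass to the limit, using $\tau_n\uparrow\infty$ from Proposition~\ref{prop:wellposed_marked_hawkes} and monotone convergence; the bound obtained at the stopped level is already uniform in $n$ and $T$.
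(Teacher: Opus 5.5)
Your proposal is correct and follows the paper's proof essentially step for step. You apply It\^o's formula and the compensation formula to $e^{-\rho t}(Y_t^\eta)^2$ at the localized level, absorb the cross terms with Young's inequality using the uniform subcriticality margin (your explicit split $\kappa/4$ plays the role of the paper's generic $\delta$), pass to the limit by monotone convergence to get an $O(\eta^2)$ discounted second-moment bound, and finish with Cauchy--Schwarz and $(1+K^q+P^q)^2\le 3(1+K^{2q}+P^{2q})$, which gives $A_*=\sqrt{3B_qC_Y}$; your remark that the absorption must be done at the stopped level, where every term is finite, is exactly how the paper handles it.
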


\begin{proof}
Fix $\eta\in(0,\eta_0]$ and $\alpha\in\mathcal A_H(k,p,y_\eta)$.
Let us consider the proof divided in two steps:
\begin{enumerate}
	\item \textit{Uniform discounted $L^2$-estimate for the Hawkes excitation.} For notational convenience, throughout this step we write $P_t^\eta:=P_t^{H,\eta}$. The excitation process satisfies
		\begin{equation}
			\label{eq:Y_eta_dynamics_proof}
			dY_t^\eta = -bY_t^\eta\,dt + \int_{\mathcal Z} \eta a(\zeta)\,N^\eta(dt,d\zeta), \qquad Y_0^\eta=y_\eta.
		\end{equation}
	Applying It\^o's formula for jump semimartingales to $f(y)=y^2$ gives
		\begin{align}
			d(Y_t^\eta)^2
			&= -2b(Y_t^\eta)^2\,dt + \int_{\mathcal Z} \left[ \bigl(Y_{t-}^\eta+\eta a(\zeta)\bigr)^2 - \bigl(Y_{t-}^\eta\bigr)^2 \right] N^\eta(dt,d\zeta)\notag\\
			&= -2b(Y_t^\eta)^2\,dt + \int_{\mathcal Z} \left[ 2\eta Y_{t-}^\eta a(\zeta) + \eta^2a(\zeta)^2 \right]N^\eta(dt,d\zeta).
			\label{eq:Y_eta_square}
		\end{align}
    Since $t\mapsto e^{-\rho t}$ is continuous and of finite variation,
		the product rule yields
		\begin{align}
			d\left( e^{-\rho t}(Y_t^\eta)^2
			\right) &= -(2b+\rho)e^{-\rho t}(Y_t^\eta)^2\,dt + e^{-\rho t} \int_{\mathcal Z} \left[ 2\eta Y_{t}^\eta a(\zeta) + \eta^2a(\zeta)^2 \right] N^\eta(dt,d\zeta).
			\label{eq:discounted_Y_eta_square}
		\end{align}
	We now justify the compensation step by localization. Let
	$(\tau_n)_{n\geq1}$ be an increasing localizing sequence such that $\tau_n\uparrow\infty$ a.s. and the relevant stopped stochastic integrals are true martingales. Applying \eqref{eq:discounted_Y_eta_square} on $[0,T\wedge\tau_n]$ yields
		\begin{align}
			e^{-\rho(T\wedge\tau_n)} \bigl(Y_{T\wedge\tau_n}^\eta\bigr)^2 - y_\eta^2 &= -(2b+\rho) \int_0^{T\wedge\tau_n} e^{-\rho t}(Y_t^\eta)^2\,dt \notag\\
			&\quad + \int_{(0,T\wedge\tau_n]\times\mathcal Z} e^{-\rho t} \left[2\eta Y_{t-}^\eta a(\zeta) + \eta^2a(\zeta)^2 \right] N^\eta(dt,d\zeta).
			\label{eq:Y_eta_square_localized}
		\end{align}
	Hence, by the compensation formula,
		\begin{align}
			&\mathbb E_{k,p,y_\eta}^{\alpha} \left[e^{-\rho(T\wedge\tau_n)} \bigl(Y_{T\wedge\tau_n}^\eta\bigr)^2 \right] - y_\eta^2
			\notag\\
			&= -(2b+\rho)\mathbb E_{k,p,y_\eta}^{\alpha} \left[ \int_0^{T\wedge\tau_n} e^{-\rho t}(Y_t^\eta)^2\,dt \right]
			\notag\\
			&\quad+ \mathbb E_{k,p,y_\eta}^{\alpha} \left[ \int_0^{T\wedge\tau_n} e^{-\rho t} \bigl( \mu(P_t^\eta)+Y_t^\eta \bigr) \left( 2\eta\bar a_1Y_t^\eta + \eta^2\bar a_2 \right)dt \right].
			\label{eq:Y_eta_squared_expectation_localized}
		\end{align}
	Here we have used the continuity of $P^\eta$ and the fact that a c\`adl\`ag process coincides with its left-limit process for Lebesgue-a.e.\ time.
	
	We next estimate the compensator integrand. Expanding, we obtain
		\begin{align}
			\bigl( \mu(P_t^\eta)+Y_t^\eta \bigr) \left( 2\eta\bar a_1Y_t^\eta + \eta^2\bar a_2 \right) =
			2\eta\bar a_1 \mu(P_t^\eta)Y_t^\eta + 2\eta\bar a_1(Y_t^\eta)^2 + \eta^2\bar a_2\mu(P_t^\eta) + \eta^2\bar a_2Y_t^\eta.
			\label{eq:Y_eta_expand}
		\end{align}
	Since \eqref{eq:uniform_subcritical_eta}, we have
		$$
		2b+\rho-2\eta_0\bar a_1>0.
		$$
	Choose $\delta>0$ sufficiently small that
		\begin{equation}
			\label{eq:kappa_definition}
			\kappa := 2b+\rho - 2\eta_0\bar a_1 - 2\delta >0.
		\end{equation}
	Young's inequality yields
		\begin{equation}
			\label{eq:young_first_eta}
			2\eta\bar a_1 \mu(P_t^\eta)Y_t^\eta \leq \delta(Y_t^\eta)^2 + \frac{\eta^2\bar a_1^2}{\delta} \mu(P_t^\eta)^2.
		\end{equation}
	and
		\begin{equation}
			\label{eq:young_second_eta}
			\eta^2\bar a_2Y_t^\eta \leq \delta(Y_t^\eta)^2 + \frac{\eta^4\bar a_2^2}{4\delta},
		\end{equation}
	Since $\mu\geq0$,
		\begin{equation}
			\label{eq:young_third_eta}
			\eta^2\bar a_2\mu(P_t^\eta) \leq \frac{\eta^2\bar a_2}{2} \left( 1+\mu(P_t^\eta)^2 \right).
		\end{equation}
	Moreover,
		$$
		2 \eta \bar a_1(Y_t^\eta)^2 \leq 2\eta_0\bar a_1(Y_t^\eta)^2.
		$$
    Since $\eta\leq\eta_0$, $\eta^4 \leq \eta_0^2\eta^2$. Consequently, there exists a constant $A_0>0$, depending only on $\eta_0,\bar a_1,\bar a_2$ and $\delta$, but independent of $\eta$, $\alpha$, $t$, $T$, and $n$, such that
		\begin{align}
			&\bigl( \mu(P_t^\eta)+Y_t^\eta \bigr) \left( 2\eta\bar a_1Y_t^\eta + \eta^2\bar a_2 \right) \leq \bigl( 2\eta_0\bar a_1+2\delta \bigr) (Y_t^\eta)^2 + A_0\eta^2 \left( 1+\mu(P_t^\eta)^2 \right).
			\label{eq:Y_eta_integrand_bound}
		\end{align}
	Substituting \eqref{eq:Y_eta_integrand_bound} into \eqref{eq:Y_eta_squared_expectation_localized} and using \eqref{eq:kappa_definition}, we obtain
		\begin{align}
			& \mathbb E_{k,p,y_\eta}^{\alpha}
			\left[ e^{-\rho(T\wedge\tau_n)} \bigl(Y_{T\wedge\tau_n}^\eta\bigr)^2 \right]
			\notag\\
			&\quad+ \kappa \mathbb E_{k,p,y_\eta}^{\alpha} \left[ \int_0^{T\wedge\tau_n} e^{-\rho t}(Y_t^\eta)^2\,dt \right]
			\notag\\
			&\leq y_\eta^2 + A_0\eta^2 \mathbb E_{k,p,y_\eta}^{\alpha} \left[ \int_0^{T\wedge\tau_n} e^{-\rho t} \left( 1+\mu(P_t^\eta)^2 \right)dt \right].
			\label{eq:Y_eta_discounted_L2_localized}
		\end{align}
	Dropping the nonnegative terminal term and using the fact that $T\wedge\tau_n\leq T$ together with the nonnegativity of the integrand, we obtain
		\begin{align}
			\kappa \mathbb E_{k,p,y_\eta}^{\alpha}
			\left[ \int_0^{T\wedge\tau_n}
			e^{-\rho t}(Y_t^\eta)^2\,dt
			\right]
			&\leq y_\eta^2 + A_0\eta^2 \mathbb E_{k,p,y_\eta}^{\alpha} \left[ \int_0^T e^{-\rho t} \left( 1+\mu(P_t^\eta)^2 \right)dt \right].
			\label{eq:Y_eta_discounted_L2_localized_bound}
		\end{align}
	Using the assumptions \eqref{eq:y_eta_scaling} and \eqref{eq:uniform_discounted_mu_moment}, we obtain
		\begin{align}
			\kappa \mathbb E_{k,p,y_\eta}^{\alpha} \left[
			\int_0^{T\wedge\tau_n} e^{-\rho t}(Y_t^\eta)^2\,dt \right]
			&\leq \eta^2 \left[ c_y^2 + A_0 \left( \frac1\rho+B_\mu \right) \right].
			\label{eq:Y_eta_uniform_localized_bound}
		\end{align}
	Since $\tau_n\uparrow\infty$ a.s., for fixed $T$,
		$$
        \mathbf 1_{\{t\leq T\wedge\tau_n\}} \uparrow \mathbf 1_{\{t\leq T\}} \qquad\text{a.s.}
		$$
	Hence the monotone convergence theorem gives
		\begin{align}
			\kappa \mathbb E_{k,p,y_\eta}^{\alpha} \left[ \int_0^T e^{-\rho t}(Y_t^\eta)^2\,dt \right]
			&\leq \eta^2 \left[ c_y^2 + A_0 \left( \frac1\rho+B_\mu \right) \right].
			\label{eq:Y_eta_finite_T_L2}
		\end{align}
	Letting $T\to\infty$ and applying monotone convergence once more yields
		\begin{equation}
			\label{eq:Y_eta_discounted_L2}
			\mathbb E_{k,p,y_\eta}^{\alpha} \left[ \int_0^\infty e^{-\rho t}(Y_t^\eta)^2\,dt \right] \leq A_Y\eta^2,
		\end{equation}
		where
		\begin{equation}
			\label{eq:C_Y_definition}
			A_Y := \frac{ c_y^2+ A_0\left(\rho^{-1}+B_\mu\right)}{\kappa}.
		\end{equation}
	Since $A_Y$ is independent of the admissible control and of $\eta\in(0,\eta_0]$, we obtain
		\begin{equation}
			\label{eq:Y_eta_discounted_L2_uniform}
			\sup_{0<\eta\leq\eta_0} \sup_{\alpha\in\mathcal A_H(k,p,y_\eta)} \mathbb E_{k,p,y_\eta}^{\alpha} \left[ \int_0^\infty e^{-\rho t}(Y_t^\eta)^2\,dt \right] \leq A_Y\eta^2.
		\end{equation}
		
\item \textit{Weighted small-excitation estimate.}
	We define
		$$
		G_t^\eta := 1+ (K_t^{H,\eta})^q + (P_t^{H,\eta})^q.
		$$
	Since the integrand below is nonnegative, Tonelli's theorem and the Cauchy-Schwarz inequality yield
		\begin{align}
			&
			\mathbb E_{k,p,y_\eta}^{\alpha} \left[ \int_0^\infty e^{-\rho t}Y_t^\eta G_t^\eta\,dt \right] \notag\\
			&\leq \left( \mathbb E_{k,p,y_\eta}^{\alpha} \left[
			\int_0^\infty e^{-\rho t}(Y_t^\eta)^2\,dt \right]
			\right)^{1/2} \left( \mathbb E_{k,p,y_\eta}^{\alpha} \left[ \int_0^\infty e^{-\rho t}(G_t^\eta)^2\,dt \right] \right)^{1/2}.
			\label{eq:weighted_CS}
		\end{align}
	Applying the Cauchy-Schwarz inequality on the product space
	$$
	\bigl(\Omega\times\mathbb R_{\geq0}, \mathbb P_{k,p,y_\eta}^{\alpha}\otimes e^{-\rho t}dt \bigr),
	$$
	we obtain
	\begin{align}
		& \mathbb E_{k,p,y_\eta}^{\alpha}
		\left[ \int_0^\infty e^{-\rho t} Y_t^\eta G_t^\eta\,dt \right]
		\notag\\
		&\leq \left( \mathbb E_{k,p,y_\eta}^{\alpha} \left[
		\int_0^\infty e^{-\rho t} (Y_t^\eta)^2\,dt \right] \right)^{1/2} \left( \mathbb E_{k,p,y_\eta}^{\alpha} \left[ \int_0^\infty e^{-\rho t} (G_t^\eta)^2\,dt \right] \right)^{1/2}.
		\label{eq:weighted_CS}
	\end{align}
	Using $ (x_1+x_2+x_3)^2 \leq 3(x_1^2+x_2^2+x_3^2)$, we obtain
		$$ (G_t^\eta)^2 \leq 3 \left( 1+ (K_t^{H,\eta})^{2q} + (P_t^{H,\eta})^{2q} \right).
		$$
	Therefore, by
		\eqref{eq:uniform_discounted_state_moment},
		\begin{align}
			&\sup_{0<\eta\leq\eta_0} \sup_{\alpha\in\mathcal A_H(k,p,y_\eta)} \mathbb E_{k,p,y_\eta}^{\alpha} \left[\int_0^\infty e^{-\rho t} (G_t^\eta)^2\,dt \right] \leq 3B_q.
			\label{eq:G_eta_L2}
		\end{align}
	Combining \eqref{eq:Y_eta_discounted_L2}, \eqref{eq:weighted_CS}, and \eqref{eq:G_eta_L2}, we obtain
		\begin{align}
			& \mathbb E_{k,p,y_\eta}^{\alpha} \left[
			\int_0^\infty e^{-\rho t} Y_t^\eta \left( 1+ (K_t^{H,\eta})^q + (P_t^{H,\eta})^q \right)dt \right] \leq \sqrt{A_Y}\,\eta\, \sqrt{3B_q}.
		\end{align}
Therefore, setting $A_*:=\sqrt{3A_YB_q}$, and taking the supremum over $\alpha\in\mathcal A_H(k,p,y_\eta)$, yields
\begin{equation}
	\label{eq:weighted_excitation_eta_bound}
	\sup_{\alpha\in\mathcal A_H(k,p,y_\eta)} \mathbb E_{k,p,y_\eta}^{\alpha} \left[ \int_0^\infty e^{-\rho t} Y_t^\eta \left( 1 + (K_t^{H,\eta})^q + (P_t^{H,\eta})^q\right)dt \right] \leq A_*\eta.
\end{equation}
\end{enumerate}
\end{proof}

The preceding proposition controls the excitation component $Y^\eta$. To use this estimate in the comparison of the Hawkes and Poisson generators, we also need to control the change in the Poisson value function caused by a disaster jump. We recall the explicit Poisson value function obtained in \cite{sakhanda2025infinitehorizon}. Its jump increment admits a particularly simple growth estimate.

\begin{lemma}[Growth of the Poisson jump increment]
\label{lem:poisson_jump_increment_growth}
	
Assume that $0<\varepsilon<1$ and that
$$
0<\omega(k,p,\zeta)\leq1 \qquad \text{for all }(k,p,\zeta) \in \mathcal S_P\times\mathcal Z.
$$
Let
$$
v(k,p) = \frac{\psi^{-\varepsilon}}{1-\varepsilon} k^{1-\varepsilon} - \frac{x}{1+\beta}p^{1+\beta}, \qquad (k,p)\in\mathcal S_P,
$$
and define
$$
\Delta_\zeta v(k,p) := v\bigl(\omega(k,p,\zeta)k,p\bigr)-v(k,p).
$$
Then, for every $(k,p)\in\mathcal S_P$,
	\begin{equation}
		\label{eq:explicit_jump_increment_bound}
		\int_{\mathcal Z} \left| \Delta_\zeta v(k,p) \right| \nu(d\zeta) \leq \frac{\psi^{-\varepsilon}}{1-\varepsilon} k^{1-\varepsilon}.
	\end{equation}
Consequently,
	\begin{equation}
		\label{eq:weighted_jump_bound}
		\int_{\mathcal Z} \left| \Delta_\zeta v(k,p) \right| \nu(d\zeta) \leq A_v \left( 1+k^q+p^q \right),
		\qquad (k,p)\in\mathcal S_P,
	\end{equation}
with $q=1-\varepsilon $ and $A_v = \frac{\psi^{-\varepsilon}}{1-\varepsilon}$.
\end{lemma}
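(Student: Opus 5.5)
The pollution term of $v$ depends only on $p$, and a disaster leaves $p$ unchanged. So it cancels in $\Delta_\zeta v$, and I would compute the increment explicitly from the capital term alone. Writing $\omega=\omega(k,p,\zeta)\in(0,1]$, one gets
\begin{equation*}
\Delta_\zeta v(k,p)=\frac{\psi^{-\varepsilon}}{1-\varepsilon}\Bigl[(\omega k)^{1-\varepsilon}-k^{1-\varepsilon}\Bigr]
=-\frac{\psi^{-\varepsilon}}{1-\varepsilon}\,k^{1-\varepsilon}\bigl(1-\omega^{1-\varepsilon}\bigr).
\end{equation*}

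Since $0<\varepsilon<1$, the exponent satisfies $1-\varepsilon\in(0,1)$. Hence $\omega\mapsto\omega^{1-\varepsilon}$ maps $(0,1]$ into $(0,1]$, and therefore $0\leq 1-\omega^{1-\varepsilon}<1$. This gives the pointwise bound
\begin{equation*}
|\Delta_\zeta v(k,p)|\leq \frac{\psi^{-\varepsilon}}{1-\varepsilon}k^{1-\varepsilon}
\end{equation*}
for every $\zeta$. Two facts are used here: $\psi^{-\varepsilon}/(1-\varepsilon)>0$, which holds because $\psi>0$ in the Poisson model of \cite{sakhanda2025infinitehorizon}, and $k>0$. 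Measurability of $\zeta\mapsto\Delta_\zeta v(k,p)$ follows from measurability of $\omega$. Integrating the pointwise bound against the probability measure $\nu$, which has $\nu(\mathcal Z)=1$, yields \eqref{eq:explicit_jump_increment_bound}.

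For \eqref{eq:weighted_jump_bound} I would take $q=1-\varepsilon$ and use the trivial inequality $k^q\leq 1+k^q+p^q$, valid because $p^q\geq0$. With $A_v=\psi^{-\varepsilon}/(1-\varepsilon)$ the bound follows at once.

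There is no real obstacle. The only point needing care is the standing assumption that $\nu$ is a probability measure: Assumptions~\ref{Assumptions} phrase it as $\sigma$-finite, but Section~3 fixes it as a probability measure, and that is the convention I rely on. The sign argument for $1-\omega^{1-\varepsilon}$ uses exactly the hypothesis $0<\varepsilon<1$.
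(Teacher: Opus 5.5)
Your proposal is correct and follows essentially the same route as the paper: cancel the pollution term, write $\Delta_\zeta v(k,p)=\frac{\psi^{-\varepsilon}}{1-\varepsilon}k^{1-\varepsilon}\bigl(\omega^{1-\varepsilon}-1\bigr)$, bound $|\omega^{1-\varepsilon}-1|\leq 1$ using $0<\varepsilon<1$, integrate against the probability measure $\nu$, and conclude with $k^q\leq 1+k^q+p^q$. Your remarks on $\psi>0$, on measurability in $\zeta$, and on the convention $\nu(\mathcal Z)=1$ spell out points that the paper uses without comment.
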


\begin{proof}
Fix $ (k,p)\in\mathcal S_P$ and $\zeta\in\mathcal Z$.
By the definition of $v$,
	\begin{align}
		\Delta_\zeta v(k,p)
		&= v\bigl(\omega(k,p,\zeta)k,p\bigr)-v(k,p)\notag\\
		&= \frac{\psi^{-\varepsilon}}{1-\varepsilon}
		\bigl(\omega(k,p,\zeta)k\bigr)^{1-\varepsilon} - \frac{x}{1+\beta}p^{1+\beta} - \frac{\psi^{-\varepsilon}}{1-\varepsilon} k^{1-\varepsilon} + \frac{x}{1+\beta}p^{1+\beta}.
	\end{align}
	The pollution terms cancel, and therefore
	\begin{equation}
		\label{eq:jump_increment_explicit}
		\Delta_\zeta v(k,p) = \frac{\psi^{-\varepsilon}}{1-\varepsilon}
		k^{1-\varepsilon} \left( \omega(k,p,\zeta)^{1-\varepsilon}-1 \right).
	\end{equation}
Since $0<\varepsilon<1$ and $0<\omega(k,p,\zeta)\leq1$,
$$
 -1 \leq \omega(k,p,\zeta)^{1-\varepsilon}-1 \leq0,
$$
hence
$$
\left| \omega(k,p,\zeta)^{1-\varepsilon}-1 \right|= 1-\omega(k,p,\zeta)^{1-\varepsilon} \leq1.
$$
Using \eqref{eq:jump_increment_explicit}, we obtain
$$
\left| \Delta_\zeta v(k,p) \right| \leq \frac{\psi^{-\varepsilon}}{1-\varepsilon} k^{1-\varepsilon}.
$$
The right-hand side is independent of $\zeta$. Since $\nu$ is a probability measure, $\nu(\mathcal Z)=1$, and therefore
	\begin{align}
		\int_{\mathcal Z} \left| \Delta_\zeta v(k,p) \right| \nu(d\zeta) \leq \frac{\psi^{-\varepsilon}}{1-\varepsilon} k^{1-\varepsilon} \int_{\mathcal Z}\nu(d\zeta) = \frac{\psi^{-\varepsilon}}{1-\varepsilon} k^{1-\varepsilon}.
	\end{align}
This proves \eqref{eq:explicit_jump_increment_bound}.
	
Finally, set $ q:=1-\varepsilon$. Since
$
k^q \leq 1+k^q+p^q,
$
we obtain
$$
\int_{\mathcal Z} \left| \Delta_\zeta v(k,p) \right| \nu(d\zeta) \leq \frac{\psi^{-\varepsilon}}{1-\varepsilon} \left( 1+k^q+p^q \right).
$$
Thus \eqref{eq:weighted_jump_bound} holds with $A_v = \frac{\psi^{-\varepsilon}}{1-\varepsilon}$.
\end{proof}

We now have the two estimates required for the comparison of the control problems. Proposition~\ref{prop:small_excitation_weighted_estimate}
shows that the discounted contribution of the Hawkes excitation is of order $\eta$, uniformly over admissible controls, while Lemma~\ref{lem:poisson_jump_increment_growth} provides the corresponding growth bound for the jump increment of the Poisson value function.

The key observation is that, when the Poisson value function is extended trivially to the enlarged Hawkes state space, the difference between the Hawkes and Poisson generators is entirely generated by the additional intensity $Y^\eta$. Consequently, the preceding two estimates allow this generator discrepancy to be controlled by a term of order $\eta$. We can therefore apply the Poisson verification function to the Hawkes dynamics and obtain matching upper and lower bounds for the Hawkes value function. This yields the main approximation result.

\begin{theorem}[Approximation of the Hawkes value function by the Poisson value function]
\label{thm:hawkes_poisson_value_approximation}

For every $\eta\in(0,\eta_0]$, let
	$$
	X^{H,\eta}=(K^{H,\eta},P^{H,\eta},Y^\eta)
	$$
be the Hawkes state process corresponding to the scaled excitation $a_\eta(\zeta):=\eta a(\zeta)$.

The excitation component satisfies
	$$
	dY_t^\eta=-bY_t^\eta\,dt+\int_{\mathcal Z}\eta a(\zeta)\,N^{H,\eta}(dt,d\zeta), \qquad Y_0^\eta=y_\eta,
	$$
where $N^{H,\eta}$ has predictable compensator $\bigl(\mu(P_{t-}^{H,\eta})+Y_{t-}^\eta\bigr)\nu(d\zeta)\,dt$.
	
For $\alpha\in\mathcal A_H(k,p,y_\eta)$, define
	$$
	J_H^\eta(k,p,y_\eta;\alpha) := \mathbb E_{k,p,y_\eta}^{\alpha} \left[ \int_0^\infty e^{-\rho t} U(K_t^{H,\eta},P_t^{H,\eta},\alpha_t)\,dt \right],
	$$
	and
	$$
	V_H^\eta(k,p,y_\eta) := \sup_{\alpha\in\mathcal A_H(k,p,y_\eta)} J_H^\eta(k,p,y_\eta;\alpha).
	$$
Let $v:\mathcal S_P\longrightarrow\mathbb R$ be the verified value function of the corresponding Poisson control problem, so that $v=V_P$.
Assume that $0<\varepsilon<1$ and
	$$
	v(k,p) = \frac{\psi^{-\varepsilon}}{1-\varepsilon}k^{1-\varepsilon} -
	\frac{x}{1+\beta}p^{1+\beta}.
	$$
Suppose that $v\in C^2(\mathcal S_P)$ and satisfies the Poisson HJB equation
	\begin{equation}
		\label{eq:Poisson_HJB_comparison}
		\rho v(k,p) = \sup_{\alpha\in\mathfrak a}
		\left\{ U(k,p,\alpha) + \mathcal L_P^\alpha v(k,p) \right\},
	\end{equation}
	where
	\begin{align}
		\mathcal L_P^\alpha v(k,p)
		&= b^{\mathrm{cap}}(k,p,\alpha)v_k(k,p)	+
		b^{\mathrm{pol}}(k,p,\alpha)v_p(k,p) + \frac12\sigma^2p^2v_{pp}(k,p) \notag\\
		&\quad+ \mu(p)\int_{\mathcal Z}\Delta_\zeta v(k,p)\,\nu(d\zeta),
	\end{align}
	with
	$$
	\Delta_\zeta v(k,p) := v\bigl(\omega(k,p,\zeta)k,p\bigr)-v(k,p).
	$$
	Assume the hypotheses of Proposition~\ref{prop:small_excitation_weighted_estimate}, with $q=1-\varepsilon$, and the hypotheses of Lemma~\ref{lem:poisson_jump_increment_growth}.
	
	Assume furthermore that, for every $\eta\in(0,\eta_0]$ and every
	$\alpha\in\mathcal A_H(k,p,y_\eta)$,
	\begin{enumerate}
		\item
		\begin{equation}
			\label{eq:utility_integrability_H_eta}
			\mathbb E_{k,p,y_\eta}^{\alpha} \left[ \int_0^\infty e^{-\rho t} \left| U(K_t^{H,\eta},P_t^{H,\eta},\alpha_t) \right|dt \right] <\infty.
		\end{equation}
		
		\item
		The transversality condition holds:
		\begin{equation}
			\label{eq:hawkes_transversality_eta}
			\lim_{T\to\infty} \mathbb E_{k,p,y_\eta}^{\alpha} \left[ e^{-\rho T} \left| v(K_T^{H,\eta},P_T^{H,\eta}) \right| \right] =0
		\end{equation}
		
		\item
		For every $T<\infty$, the family
		\begin{equation}
			\label{eq:UI_stopped_v_eta}
			\left\{ e^{-\rho\tau} v(K_\tau^{H,\eta},P_\tau^{H,\eta}) : \tau\leq T \text{ is a stopping time} \right\}
		\end{equation}
	is uniformly integrable.
	\end{enumerate}
Finally, assume that there exists a Borel measurable selector $\widehat\alpha_P:\mathcal S_P\longrightarrow\mathfrak a$
attaining the supremum in \eqref{eq:Poisson_HJB_comparison}, and that, for every $\eta\in(0,\eta_0]$, the feedback control
	$$
	\widehat\alpha_{P,t}^{\,\eta} := \widehat\alpha_P(K_t^{H,\eta},P_t^{H,\eta})
	$$
belongs to $\mathcal A_H(k,p,y_\eta)$. Then, for every $\eta\in(0,\eta_0]$,
	\begin{equation}
		\label{eq:value_approximation_eta} \left| V_H^\eta(k,p,y_\eta)-V_P(k,p) \right| \leq \frac{\psi^{-\varepsilon}}{1-\varepsilon}A_*\eta,
	\end{equation}
where $A_*$ is the constant appearing in Proposition~\ref{prop:small_excitation_weighted_estimate}.
Consequently,
	$$
	V_H^\eta(k,p,y_\eta) \longrightarrow V_P(k,p)
	\qquad\text{as } \eta\downarrow 0.
	$$
\end{theorem}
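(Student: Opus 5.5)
The plan is a verification argument: take the Poisson value function $v$, extended trivially to $\mathcal S_H$ by $\tilde v(k,p,y):=v(k,p)$, and apply It\^o's formula to $e^{-\rho t}\tilde v(X_t^{H,\eta})$ along the Hawkes dynamics. Since $\tilde v$ does not depend on $y$, the drift $-bY$ of the excitation does not contribute. The Hawkes generator therefore reads
\begin{align*}
\mathcal L_H^{\alpha}\tilde v(k,p,y)=\mathcal L_P^\alpha v(k,p)+y\int_{\mathcal Z}\Delta_\zeta v(k,p)\,\nu(d\zeta),
\end{align*}
because the compensator is $(\mu(p)+y)\nu(d\zeta)\,dt$. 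So the generator discrepancy is exactly $Y^\eta_t\int\Delta_\zeta v\,d\nu$. By Lemma~\ref{lem:poisson_jump_increment_growth} this is bounded in absolute value by $\frac{\psi^{-\varepsilon}}{1-\varepsilon}\,Y^\eta_t(1+(K^{H,\eta}_t)^q+(P^{H,\eta}_t)^q)$ with $q=1-\varepsilon$. By Proposition~\ref{prop:small_excitation_weighted_estimate}, its discounted integral has expectation at most $\frac{\psi^{-\varepsilon}}{1-\varepsilon}A_*\eta$, uniformly in $\alpha$.

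\textbf{Step 1: the martingale identity.} Fix $\alpha\in\mathcal A_H(k,p,y_\eta)$ and localize with stopping times $\tau_n\uparrow\infty$ (from Proposition~\ref{prop:wellposed_marked_hawkes}) so that the $dW$ integral and the compensated jump integral are true martingales on $[0,T\wedge\tau_n]$. This gives
\begin{align*}
\mathbb E\bigl[e^{-\rho(T\wedge\tau_n)}v(K_{T\wedge\tau_n},P_{T\wedge\tau_n})\bigr]=v(k,p)+\mathbb E\int_0^{T\wedge\tau_n}e^{-\rho t}\bigl(-\rho v+\mathcal L_P^{\alpha_t}v+Y_t^\eta\textstyle\int\Delta_\zeta v\,d\nu\bigr)(K_t,P_t)\,dt .
\end{align*}
Let $n\to\infty$. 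The left side converges by the uniform integrability assumption \eqref{eq:UI_stopped_v_eta}. The right side converges by dominated convergence, using \eqref{eq:utility_integrability_H_eta}, the HJB equation, and integrability of the discrepancy term. For the $\mathcal L_P^\alpha v-\rho v$ part, I would add and subtract $U$ and use monotone convergence on the nonpositive quantity $U+\mathcal L_P^\alpha v-\rho v\le0$. Then let $T\to\infty$ and use transversality \eqref{eq:hawkes_transversality_eta}.

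\textbf{Step 2: upper bound.} The HJB inequality gives $-\rho v+\mathcal L_P^{\alpha_t}v\le -U(K_t,P_t,\alpha_t)$. Hence
\begin{align*}
J_H^\eta(k,p,y_\eta;\alpha)\le v(k,p)+\mathbb E\int_0^\infty e^{-\rho t}Y_t^\eta\Bigl|\textstyle\int\Delta_\zeta v\,d\nu\Bigr|dt\le v(k,p)+\frac{\psi^{-\varepsilon}}{1-\varepsilon}A_*\eta .
\end{align*}
Taking the supremum over $\alpha$ gives $V_H^\eta-V_P\le\frac{\psi^{-\varepsilon}}{1-\varepsilon}A_*\eta$. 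In fact the discrepancy term is nonpositive, since $\Delta_\zeta v\le0$ by \eqref{eq:jump_increment_explicit}, so the sharper bound $V_H^\eta\le V_P$ also follows.

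\textbf{Step 3: lower bound.} Use the admissible feedback $\widehat\alpha^{\,\eta}_{P}$. For this control the HJB holds with equality, so Step 1 gives
\begin{align*}
J_H^\eta(k,p,y_\eta;\widehat\alpha^{\,\eta}_P)=v(k,p)+\mathbb E\int_0^\infty e^{-\rho t}Y_t^\eta\textstyle\int\Delta_\zeta v\,d\nu\,dt\ge v(k,p)-\frac{\psi^{-\varepsilon}}{1-\varepsilon}A_*\eta .
\end{align*}
Since $V_H^\eta\ge J_H^\eta(\cdot;\widehat\alpha^{\,\eta}_P)$, combining with Step 2 yields \eqref{eq:value_approximation_eta}. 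Convergence as $\eta\downarrow0$ is then immediate.

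The main obstacle is the limit passage in Step 1. One must justify that the stopped local martingales vanish in expectation and that the terminal term converges. For the terminal term, I would rely on \eqref{eq:UI_stopped_v_eta} for $n\to\infty$ and on \eqref{eq:hawkes_transversality_eta} for $T\to\infty$. The jump integrand is $|\Delta_\zeta v|\lesssim k^{1-\varepsilon}$, with intensity $\mu+Y$. Its finiteness under $e^{-\rho t}dt$ follows from Cauchy–Schwarz, \eqref{eq:uniform_discounted_mu_moment}, \eqref{eq:uniform_discounted_state_moment} and \eqref{eq:Y_eta_discounted_L2}. This suffices to make the compensated jump integral a true martingale after localization and to apply dominated convergence.
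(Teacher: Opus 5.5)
Your proposal is correct and follows essentially the same route as the paper's proof. Both arguments use the trivial extension $\widetilde v(k,p,y)=v(k,p)$ and the exact generator identity with discrepancy $y\int_{\mathcal Z}\Delta_\zeta v\,\nu(d\zeta)$, bounded via the Lemma and the Proposition; both then localize and pass to the limit using uniform integrability and transversality, obtain the upper bound from the HJB inequality and the lower bound from the Poisson-optimal feedback. Your additional remark that $\Delta_\zeta v\le 0$ is also correct, and it gives the sharper one-sided bound $V_H^\eta\le V_P$, which the paper does not exploit.
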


\begin{proof}
Set $ A_v := \frac{\psi^{-\varepsilon}}{1-\varepsilon}$ and $q:=1-\varepsilon$. By Lemma~\ref{lem:poisson_jump_increment_growth},
	\begin{equation}
		\label{eq:weighted_jump_bound_from_lemma}
		\int_{\mathcal Z} \left| \Delta_\zeta v(k,p) \right| \nu(d\zeta) \leq A_v \left( 1+k^q+p^q \right), \qquad (k,p)\in\mathcal S_P.
	\end{equation}
Moreover, Proposition~\ref{prop:small_excitation_weighted_estimate} gives
	\begin{equation}
		\label{eq:weighted_excitation_from_prop}
		\sup_{\alpha\in\mathcal A_H(k,p,y_\eta)} \mathbb E_{k,p,y_\eta}^{\alpha} \left[ \int_0^\infty e^{-\rho t} Y_t^\eta \left( 1+ (K_t^{H,\eta})^q+ (P_t^{H,\eta})^q \right)dt \right] \leq
		A_*\eta.
	\end{equation}
We divide the remainder of the proof into five steps.
\begin{enumerate}   
	\item \textit{Extension of the Poisson value function to the Hawkes state space.} We define $ \widetilde v:\mathcal S_H \longrightarrow \mathbb R$ by
	$$
	\widetilde v(k,p,y):=v(k,p).
	$$
	In particular, $ \partial_y\widetilde v(k,p,y)=0$. For a sufficiently regular test function $\varphi:\mathcal S_H\to\mathbb R$, the controlled generator of the Hawkes state process is
	\begin{align}
		\mathcal L_{H, \eta}^\alpha\varphi(k,p,y)
		=& b^{\mathrm{cap}}(k,p,\alpha) \partial_k\varphi(k,p,y) + b^{\mathrm{pol}}(k,p,\alpha) \partial_p\varphi(k,p,y) \notag\\
		&+ \frac12\sigma^2p^2 \partial_{pp}\varphi(k,p,y) - by\,\partial_y\varphi(k,p,y) \notag\\
		&+\bigl(\mu(p)+y\bigr) \int_{\mathcal Z} \left[ \varphi \bigl( \omega(k,p,\zeta)k, p, y+\eta a(\zeta) \bigr) - \varphi(k,p,y) \right] \nu(d\zeta). 
		\label{eq:Hawkes_full_generator}
	\end{align}
This is the standard generator associated with a jump-diffusion having state-dependent marked jump intensity; see, for example, \cite{applebaum_levy_2009,oksendal_applied_2019}.
Since $\widetilde v$ is independent of $y$,
$$
\widetilde v\bigl(\omega(k,p,\zeta)k,p,y+\eta a(\zeta)\bigr) - \widetilde v(k,p,y) = \Delta_\zeta v(k,p).
$$
Therefore, recalling the definition of the Poisson generator
$\mathcal L_P^\alpha$,
\begin{equation}
\mathcal L_{H,\eta}^\alpha\widetilde v(k,p,y) = \mathcal L_P^\alpha v(k,p) + y\int_{\mathcal Z} \Delta_\zeta v(k,p)\,\nu(d\zeta).
\label{eq:exact_generator_difference}
\end{equation}
Hence, by Lemma~\ref{lem:poisson_jump_increment_growth},
\begin{equation}
\left|\mathcal L_{H,\eta}^\alpha\widetilde v(k,p,y) - \mathcal L_P^\alpha v(k,p) \right| \leq A_vy\bigl(1+k^q+p^q\bigr).
\label{eq:generator_weighted_bound}
\end{equation}

	\item \textit{Residual of the Poisson HJB under the Hawkes generator.} Since $v$ satisfies \eqref{eq:Poisson_HJB_comparison}, for every admissible $\alpha\in\mathfrak a$,
	\begin{equation}
		U(k,p,\alpha) + \mathcal L_P^\alpha v(k,p) - \rho v(k,p) \leq 0.
		\label{eq:Poisson_HJB_inequality}
	\end{equation}
	Combining this inequality with \eqref{eq:exact_generator_difference}, we obtain
	\begin{align*}
		&U(k,p,\alpha) + \mathcal L_{H, \eta}^\alpha\widetilde v(k,p,y) - \rho\widetilde v(k,p,y) \\
		&= U(k,p,\alpha) + \mathcal L_P^\alpha v(k,p) - \rho v(k,p) + y\int_{\mathcal Z} \Delta_\zeta v(k,p)\,\nu(d\zeta) \\
		&\leq y\int_{\mathcal Z} \Delta_\zeta v(k,p)\,\nu(d\zeta).
	\end{align*}
	Therefore, using \eqref{eq:weighted_jump_bound},
	\begin{equation}
		U(k,p,\alpha) + \mathcal L_{H, \eta}^\alpha\widetilde v(k,p,y) - \rho\widetilde v(k,p,y) \leq A_vy\bigl(1+k^q+p^q\bigr).
		\label{eq:Hawkes_residual_bound}
	\end{equation}

	\item \textit{Localization and It\^o's formula.} Fix an arbitrary control $ \alpha\in\mathcal A_H(k,p,y)$. Let
	$$
	\widetilde N^{H,\eta}(dt,d\zeta) := N^{H,\eta}(dt,d\zeta) - \bigl( \mu(P_{t-}^{H,\eta})+Y_{t-}^\eta \bigr) \nu(d\zeta)\,dt
	$$
	denote the compensated marked random measure associated with $N^{H, \eta}$. Applying the It\^o formula for semimartingales with jumps to $e^{-\rho t} v(K_t^{H,\eta},P_t^{H,\eta})$ the local martingale terms are
	\begin{align}
		M_t^W &= \int_0^t e^{-\rho s} \sigma P_s^{H,\eta} v_p(K_s^{H,\eta},P_s^{H,\eta})\,dW_s, \\
		M_t^{N,\eta} &= \int_{(0,t]\times\mathcal Z} e^{-\rho s} \Delta_\zeta v(K_{s-}^{H,\eta},P_{s-}^{H,\eta}) \widetilde N^{H,\eta}(ds,d\zeta).
		\label{eq:jump_martingale_comparison}
	\end{align}
	The It\^o formula for semimartingales with jumps and the associated stochastic integration theory are standard; see \cite{protter_stochastic_2005}. For stochastic integration with Poisson random measures and L\'evy-driven SDEs, see also \cite{applebaum_levy_2009}; for the corresponding jump-diffusion control framework, see \cite{oksendal_applied_2019}. 
	
	Since $M^W$ and $M^{N,\eta}$ are local martingales, there exists an increasing sequence of stopping times $(\tau_n)_{n\geq1}$ such that
	$$
	\tau_n\uparrow\infty
	\qquad\text{a.s.}
	$$
	such the stopped processes $M^W_{\cdot\wedge\tau_n}$, $M^{N, \eta}_{\cdot\wedge\tau_n}$ are true martingales; see, for example, \cite{protter_stochastic_2005}.

Therefore, for every $T<\infty$, taking expectations in It\^o's formula gives
		\begin{align}
		&\mathbb E_{k,p,y_\eta}^{\alpha} \left[ e^{-\rho(T\wedge\tau_n)} v(K_{T\wedge\tau_n}^{H,\eta}, P_{T\wedge\tau_n}^{H,\eta}) \right] - v(k,p)
		\notag\\
		&= \mathbb E_{k,p,y_\eta}^{\alpha} \left[ \int_0^{T\wedge\tau_n} e^{-\rho t} \left[ \mathcal L_{H,\eta}^{\alpha_t}\widetilde v (K_{t-}^{H,\eta},P_{t-}^{H,\eta},Y_{t-}^\eta) - \rho v(K_{t-}^{H,\eta},P_{t-}^{H,\eta}) \right]dt \right].
		\label{eq:localized_expectation_eta}
	\end{align}
Using \eqref{eq:Hawkes_residual_bound}, we obtain
	\begin{align}
		&\mathbb E_{k,p,y_\eta}^{\alpha}
		\left[ \int_0^{T\wedge\tau_n} e^{-\rho t} U(K_t^{H,\eta},P_t^{H,\eta},\alpha_t)\,dt \right]
		\notag\\
		&\leq v(k,p) - \mathbb E_{k,p,y_\eta}^{\alpha} \left[ e^{-\rho(T\wedge\tau_n)} v(K_{T\wedge\tau_n}^{H,\eta}, P_{T\wedge\tau_n}^{H,\eta}) \right]
		\notag\\
		&\quad+ A_v \mathbb E_{k,p,y_\eta}^{\alpha} \left[
		\int_0^{T\wedge\tau_n} e^{-\rho t} Y_t^\eta \left( 1+ (K_t^{H,\eta})^q+ (P_t^{H,\eta})^q \right)dt \right].
		\label{eq:localized_upper_eta}
	\end{align}
    
    \item \textit{Removal of localization and passage to the infinite horizon.} Fix $T<\infty$. Since $ \tau_n \uparrow \infty$ a.s., we have
	$$
	T\wedge\tau_n\longrightarrow T \qquad\text{a.s.}  \text{ as } n\to\infty .
	$$
	By \eqref{eq:utility_integrability_H_eta}, dominated convergence gives
	$$
	\mathbb E^\alpha \left[ \int_0^{T\wedge\tau_n} e^{-\rho t}
	U(K_t^{H,\eta},P_t^{H,\eta},\alpha_t)\,dt \right] \longrightarrow \mathbb E^\alpha \left[ \int_0^T e^{-\rho t} U(K_t^{H,\eta},P_t^{H,\eta},\alpha_t)\,dt \right].
	$$
	
	Since the weighted excitation integrand is nonnegative, monotone
	convergence gives
	\begin{align}
		& \mathbb E^\alpha
		\left[ \int_0^{T\wedge\tau_n} e^{-\rho t} Y_t^\eta \left(1+(K_t^{H,\eta})^q+ (P_t^{H,\eta})^q \right)dt \right]
		\notag\\
		&\longrightarrow \mathbb E^\alpha \left[ \int_0^T e^{-\rho t} Y_t^\eta \left( 1+ (K_t^{H,\eta})^q+ (P_t^{H,\eta})^q \right)dt \right].
	\end{align}
	
	Moreover,
	$$
	e^{-\rho(T\wedge\tau_n)} v(K_{T\wedge\tau_n}^{H,\eta}, P_{T\wedge\tau_n}^{H,\eta}) \longrightarrow e^{-\rho T} v(K_T^{H,\eta},P_T^{H,\eta}) \qquad\text{a.s.},
    $$
	and the uniform-integrability assumption \eqref{eq:UI_stopped_v_eta} implies convergence in $L^1$. Thus, letting $n\to\infty$ in \eqref{eq:localized_upper_eta},
	\begin{align}
		& \mathbb E^\alpha \left[ \int_0^T e^{-\rho t} U(K_t^{H,\eta},P_t^{H,\eta},\alpha_t)\,dt \right] \notag\\
		&\leq v(k,p) - \mathbb E^\alpha \left[ e^{-\rho T} v(K_T^{H,\eta},P_T^{H,\eta}) \right] \notag\\
		&\quad+ A_v \mathbb E^\alpha \left[ \int_0^T e^{-\rho t}
		Y_t^\eta \left( 1+ (K_t^{H,\eta})^q+ (P_t^{H,\eta})^q \right)dt \right].
		\label{eq:finite_T_upper_eta}
	\end{align}

    Letting $T\to\infty$, the utility term converges by \eqref{eq:utility_integrability_H_eta}, the terminal term vanishes by \eqref{eq:hawkes_transversality_eta}, and the nonnegative weighted excitation term converges by monotone convergence. Hence
	\begin{align}
		J_H^\eta(k,p,y_\eta;\alpha)
		&\leq v(k,p) + A_v \mathbb E_{k,p,y_\eta}^{\alpha} \left[ \int_0^\infty e^{-\rho t} Y_t^\eta \left( 1+ (K_t^{H,\eta})^q+ (P_t^{H,\eta})^q \right)dt \right].
		\label{eq:fixed_control_upper_eta}
	\end{align}
	
	Since $\alpha$ was arbitrary, Proposition~\ref{prop:small_excitation_weighted_estimate} gives
	\begin{equation}
		\label{eq:VH_upper_eta} 
		V_H^\eta(k,p,y_\eta) \leq v(k,p)+A_vA_*\eta.
	\end{equation}

	\item \textit{Lower estimate using the Poisson-optimal feedback.}
	Let $\widehat\alpha_P:\mathcal S_P\longrightarrow\mathfrak a$ be the measurable feedback attaining the supremum in the Poisson HJB. Thus,
	\begin{equation}
		U(k,p,\widehat\alpha_P(k,p)) + \mathcal L_P^{\widehat\alpha_P(k,p)}v(k,p) - \rho v(k,p) = 0.
		\label{eq:HJB_equality_optimal_control}
	\end{equation}
	The use of measurable maximizing feedbacks and the corresponding verification argument is standard in stochastic control; see \cite{fleming_controlled_2006, pham_continuous-time_2009}, and, in the jump-diffusion setting, \cite{oksendal_applied_2019}.
	By assumption, the feedback control $ \widehat\alpha_{P,t}^{\,\eta} = \widehat\alpha_P (K_t^{H,\eta},P_t^{H,\eta})$ is admissible for the Hawkes problem. Combining \eqref{eq:HJB_equality_optimal_control} and \eqref{eq:exact_generator_difference}, we obtain
	\begin{align}
		U(k,p,\widehat\alpha_P(k,p)) + \mathcal L_{H, \eta}^{\widehat\alpha_P(k,p)} \widetilde v(k,p,y) - \rho\widetilde v(k,p,y) = y \int_{\mathcal Z} \Delta_\zeta v(k,p)\,\nu(d\zeta).
		\label{eq:exact_residual_optimal}
	\end{align}
Repeating the localization and limiting arguments above under the
control $\widehat\alpha_P^{\,\eta}$ gives
\begin{align}
    &J_H^\eta(k,p,y_\eta;\widehat\alpha_P^{\,\eta}) - v(k,p) = \mathbb E_{k,p,y_\eta}^{\widehat\alpha_P^{\,\eta}} \left[ \int_0^\infty e^{-\rho t}Y_t^\eta \int_{\mathcal Z} \Delta_\zeta v (K_t^{H,\eta},P_t^{H,\eta}) \,\nu(d\zeta)\,dt \right].
    \label{eq:exact_error_identity_eta}
\end{align}
By the triangle inequality, Lemma~\ref{lem:poisson_jump_increment_growth}, and
Proposition~\ref{prop:small_excitation_weighted_estimate},
\begin{align}
    &\left| J_H^\eta (k,p,y_\eta;\widehat\alpha_P^{\,\eta}) - v(k,p) \right| \notag\\
    &\qquad\leq A_v \mathbb E_{k,p,y_\eta}^{\widehat\alpha_P^{\,\eta}} \left[ \int_0^\infty e^{-\rho t}Y_t^\eta \left( 1+(K_t^{H,\eta})^q+(P_t^{H,\eta})^q \right)dt \right] \notag\\
    &\qquad\leq A_vA_*\eta.
\end{align}
Hence,
$$
J_H^\eta (k,p,y_\eta;\widehat\alpha_P^{\,\eta}) \geq v(k,p)-A_vA_*\eta.
$$
Since $\widehat\alpha_P^{\,\eta}$ is admissible for the Hawkes problem,
$$
V_H^\eta(k,p,y_\eta) \geq  J_H^\eta (k,p,y_\eta;\widehat\alpha_P^{\,\eta}) \geq v(k,p)-A_vA_*\eta.
$$
Together with \eqref{eq:VH_upper_eta}, this yields
$$
v(k,p)-A_vA_*\eta \leq V_H^\eta(k,p,y_\eta) \leq v(k,p)+A_vA_*\eta.
$$
Therefore,
\begin{equation}
    \left| V_H^\eta(k,p,y_\eta)-v(k,p) \right| \leq A_vA_*\eta.
    \label{eq:value_error_eta_final}
\end{equation}
Finally, since $v=V_P$ and
$
A_v=\frac{\psi^{-\varepsilon}}{1-\varepsilon},
$
we conclude that
$$
\left| V_H^\eta(k,p,y_\eta)-V_P(k,p) \right| \leq \frac{\psi^{-\varepsilon}}{1-\varepsilon}A_*\eta.
$$
Letting $\eta\downarrow0$ yields
$$
V_H^\eta(k,p,y_\eta) \longrightarrow V_P(k,p).
$$
\end{enumerate}
\end{proof}

Theorem~\ref{thm:hawkes_poisson_value_approximation} therefore provides a quantitative justification for using the Poisson control problem as an approximation of the Hawkes problem in the small-excitation regime. The uniform weighted estimate over admissible controls leads to an $O(\eta)$ bound for the difference between the Hawkes and Poisson value functions. In particular, the Poisson value function arises as the limiting value function as the magnitude of Hawkes self-excitation tends to zero.

\bibliographystyle{plain}
\bibliography{references}

\end{document}